%

\documentclass{amsart}
\usepackage{amsfonts}
\usepackage{amsmath}
\usepackage{amscd}
\usepackage{amssymb}
\usepackage{amsxtra}
\usepackage{latexsym}
\usepackage{amsthm}
\usepackage{mathabx}
\usepackage{graphicx}
\usepackage[bookmarks,colorlinks,pagebackref]{hyperref} 
\input{xy}
\xyoption{all}

\theoremstyle{plain}

\newtheorem*{conjectuur*}{Conjecture}

\swapnumbers
\newtheorem{theorem}[subsection]{Theorem}
\newcommand\Thm[1]{Theorem~\ref{#1}}
\newtheorem{corollary}[subsection]{Corollary}
\newcommand\Cor[1]{Corollary~\ref{#1}}
\newtheorem{lemma}[subsection]{Lemma}
\newcommand\Lem[1]{Lemma~\ref{#1}}
\newtheorem{proposition}[subsection]{Proposition}
\newcommand\Prop[1]{Proposition~\ref{#1}}

\newtheorem{conjecture}[subsection]{Conjecture}

\newtheorem*{maintheorem}{Main Theorem}

\theoremstyle{definition}
\newtheorem{definition}[subsection]{Definition}

\newtheorem{example}[subsection]{Example}

\theoremstyle{remark}

\newtheorem{remark}[subsection]{Remark}

\swapnumbers


\newcommand{\emptyprop}{q}

\newcommand \ann[2]{\operatorname{Ann}_{#1}(#2)}

\newcommand \ch{characteristic}
\newcommand \CM{Coh\-en-Mac\-au\-lay}

\newcommand \ext[4]{\operatorname{Ext}_{#1}^{#2}(#3,#4)}
\renewcommand \hom [3]{\operatorname{Hom}_{#1}(#2,#3)} 
\newcommand \homo{homomorphism}

\renewcommand\iff{if and only if}

\newcommand \inv[1]{{#1^{-1}}}

\newcommand \iso{\cong}

\newcommand \map[1]{{\newcommand{\tmpprop}{#1q}  \if\tmpprop\emptyprop \to\else \xrightarrow{{\phantom{i}{#1}\phantom{i}}}\fi}} 
\newcommand \maxim{\mathfrak m}
\newcommand \nat{\mathbb N}

\newcommand \op\operatorname
\newcommand \pol[2]{#1[#2]}
\newcommand \pow[2]{#1[[#2]]}

\newcommand \range [2]{#1,\dots,#2}

\newcommand \rij[2]{(#1_1,\dots,#1_{#2})}
\let\sub\subseteq

\newcommand \tensor{\otimes}


\newcommand \exactseq [5]{0\to{#1}\:\map{#2}\:{#3}\:\map{#4}\:{#5}\to0}
\newcommand \Exactseq [3]{0\to {#1}\to {#2}\to {#3}\to 0}

\newcommand{\commdiagram}[9][]{%
\begin{equation}
{\newcommand{\tmpprop}{#1q} 
\if\tmpprop\emptyprop \relax\else \label{#1}\fi}
\begin{aligned}%
\mbox{
\begin{picture}(130,90)%
\put(120,70){\vector( 0,-1){50}}%
\put(10,80){\vector( 1, 0){100}}%
\put(0,70){\vector( 0,-1){50}}%
\put(10,10){\vector( 1, 0){100}}%
\put(115,80){\makebox(0,0)[l]{$#4$}}%
\put(5,80){\makebox(0,0)[r]{$#2$}}%
\put(115,10){\makebox(0,0)[l]{$#9$}}%
\put(5,10){\makebox(0,0)[r]{$#7$}}%
\put(-3,50){\makebox(0,0)[r]{$#5$}}
\put(123,50){\makebox(0,0)[l]{$#6$}}
\put(60,3){\makebox(0,0)[c]{$#8$}}
\put(60,88){\makebox(0,0)[c]{$#3$}}
\end{picture}}
\end{aligned}
\end{equation}}

\newcommand\commtrianglefront[7][]{%
\begin{equation}
{\newcommand{\tmpprop}{#1q} 
\if\tmpprop\emptyprop \relax\else \label{#1}\fi}
\begin{aligned}%
\mbox{
\begin{picture}(120,80)%
\put(55,68){\vector(-1,-2){30}}
\put(65,68){\vector(1,-2){30}}
\put(30,5){\vector(1,0){60}}
\put(60,75){\makebox(0,0)[c]{$#2$}}
\put(25,5){\makebox(0,0)[r]{$#4$}}
\put(95,5){\makebox(0,0)[l]{$#6$}}
\put(60,0){\makebox(0,0)[c]{$#5$}}
\put(37,43){\makebox(0,0)[r]{$#3$}}
\put(83,43){\makebox(0,0)[l]{$#7$}}
\end{picture}}
\end{aligned}
\end{equation}}

\newcommand\commtriangleback[7][]{%
\begin{equation}
{\newcommand{\tmpprop}{#1q}
\if\tmpprop\emptyprop \relax\else \label{#1}\fi}
\begin{aligned}%
\mbox{
\begin{picture}(120,80)%
\put(55,70){\vector(-1,-2){30}}
\put(65,70){\vector(1,-2){30}}
\put(30,5){\vector(1,0){60}}
\put(60,75){\makebox(0,0)[c]{$#2$}}
\put(25,5){\makebox(0,0)[r]{$#6$}}
\put(95,5){\makebox(0,0)[l]{$#4$}}
\put(60,0){\makebox(0,0)[c]{$#5$}}
\put(37,43){\makebox(0,0)[r]{$#7$}}
\put(83,43){\makebox(0,0)[l]{$#3$}}
\end{picture}}
\end{aligned}
\end{equation}}


\hyphenation{Noe-the-rian}



 \newcommand\fr{Frobenius transform}
 \newcommand\persym{persymmetric}
  \newcommand\pcan{pseudo-canonical}
\newcommand \trans[1]{\mat{#1}^{\text T}}
\newcommand \scal[2]{#1_{#2}^\wedge}

\newcommand\qdig[2]{\mathring{#1}_{#2}}
 \newcommand\mulel[2]{{#1}_{#2}^\times}
  \newcommand\quot[1]{\texttt q(#1)}
    \newcommand\rem[1]{\texttt r(#1)}
\newcommand\can[2]{\canring{#1}{#2}{}}
\newcommand\canring[3]{\mathbf K_{#2}^{#3}(#1)}

\newcommand\unmix[1]{{#1}^{\text{unm}}}

\newcommand\mat[1]{\mathbb{#1}}

\renewcommand\dim[1]{\op{dim}(#1)}

\newcommand\frob[1]{\mathbf{F}_{#1}}
\newcommand\ndo[1]{\op{End}(#1)}

\newcommand\fl[1]{\mathfrak{#1}}
\newcommand\matlis[1]{#1^\vee}
\newcommand\frobmatlis[1]{\mathbf{T}(#1)}

\newcommand\matlisring[2]{#2^\vee_{#1}}
\newcommand\class[1]{[#1]}

\newcommand\hlc[3]{\op{H}_{#3}^{#2}(#1)}

\newcommand\mul[2]{{\ell}_{#2}(#1)}
\newcommand\tuple[1]{\mathbf{#1}}

\newcommand \len[1]{\ell(#1)}

\title {Hochster's small MCM conjecture for three-dimensional weakly F-split rings}
\author{Hans Schoutens}
\date\today
\address{Department of Mathematics\\
NYC College of Technology and
the CUNY Graduate Center\\
New York, NY, USA}
\email{hschoutens@citytech.cuny.edu}
\subjclass[2010]{13D22,13D45,13A35}
\begin{document}

\begin{abstract} 
We prove Hochster's small MCM conjecture for three-dimensional complete F-pure rings.
We deduce this from  a more general criterion, and show that  only a weakening of the   notion of F-purity is needed, to wit, being weakly F-split.  We conjecture that any complete ring is weakly F-split.
  \end{abstract}
\maketitle



\section{Introduction}
Let $R$ be a $d$-dimensional  Noetherian local ring with residue field $k$. A module $M$ is called a \emph{small MCM} (=finitely generated maximal \CM\ module), if it has depth $d$. Since all modules will tacitly be assumed to be finitely generated, we drop the modifier `small' altogether. Without any further assumption on $R$, these might not exist, but Hochster conjectured that any complete local ring admits an MCM. The first unknown case of Hochster's conjecture is in dimension three.     We  proved in \cite{SchToric} that if   MCM's exist such that their multiplicities are not too big, then the equal \ch\ zero case follows from the positive \ch\ case; in the mixed \ch\ case, we even know less.  In this paper, we will mainly tackle the case  that $R$ is complete, of positive \ch\ $p$, and has dimension  $d=3$.  I now briefly describe the strategy to obtain a small MCM in this setting.

\subsection{Higher \pcan\ modules} 
If $(R,\maxim)$ is complete and \CM, it admits a canonical module $\omega$. Without the \CM\ assumption, we lack   Grothendieck vanishing, forcing us to define a sequence of modules instead: given a module $M$, we define its \emph{$i$-th higher \pcan\ module} $\can Mi$ as the Matlis dual of the local cohomology module $\hlc M{d-i}\maxim$. Of course, if $M$ is MCM, then all higher \pcan\ modules $\can Mi$, for $i=1,\dots,d$, vanish. Our fist main result (without any assumption on the \ch) is a substantial   weakening of this:

\begin{theorem}\label{T:pcanMCM}
A three-dimensional complete local ring admits an MCM \iff\ there exists a module $M$ such that $\can M1$ has positive depth.
\end{theorem}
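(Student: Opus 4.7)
\emph{Forward direction.} If $R$ admits an MCM $L$, then $\hlc Li\maxim=0$ for $i<3$, so $\can L1=\matlis{\hlc L2\maxim}=0$, which has infinite (hence positive) depth; thus $M:=L$ witnesses the right-hand side.

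\emph{Backward direction.} Let $N:=\can M1$ have positive depth. The standard Matlis-duality formula $(0:_{\matlis A}x)=\matlis{A/xA}$ for Artinian $A$ converts the existence of an $N$-regular element $x\in\maxim$ into the identity $x\cdot\hlc M2\maxim=\hlc M2\maxim$: multiplication by $x$ is surjective on the Artinian $\maxim$-torsion module $\hlc M2\maxim$. Note that $N$ itself is not the sought MCM, since $\dim\matlis{\hlc M2\maxim}\leq 2$; so we must modify $M$ rather than work with $N$ directly.

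My plan is to leverage this $x$-divisibility of $\hlc M2\maxim$, combined with the finite generation of the dual $N$, to modify $M$ into an MCM in two steps. In step (a), construct an extension $0\to C\to M'\to M\to 0$, with correction module $C$ produced from $N$ via local duality on the dualizing complex of $R$, so that the connecting map in the cohomology long exact sequence forces $\hlc{M'}2\maxim=0$. In step (b), pass to $M'/\hlc{M'}0\maxim$ and, if needed, a further small extension to eliminate $\hlc{M'}0\maxim$ and $\hlc{M'}1\maxim$; once $\hlc{-}2\maxim$ is out of the way the remaining obstructions in a three-dimensional ring are comparatively tame and can be absorbed by standard syzygy/unmixed-part arguments.

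The crux is step (a). Surjectivity of $x$ on $\hlc M2\maxim$ alone does \emph{not} force vanishing: the injective hull $E(k)$ is $x$-divisible for every $x\in\maxim$. So the argument must mobilize the stronger input that $x$ is regular on the \emph{finitely generated} module $N$, presumably through a Nakayama argument after Matlis-dualizing back, together with the dualizing-complex pairing between $M$ and its higher \pcan\ modules. Producing the correct extension class whose $C$ genuinely kills $\hlc M2\maxim$ is where I expect the technical heart of the proof to reside.
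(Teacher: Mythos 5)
Your forward direction is correct and matches the paper. But the backward direction has a fundamental gap: you are searching for the MCM in the wrong place. The module that ends up being MCM is not any modification $M'$ of $M$, but rather $\can M0 = \matlis{\hlc M3\maxim}$, the Matlis dual of the \emph{top} local cohomology. You correctly observe that $N := \can M1$ cannot itself be the MCM since it has dimension $\leq 2$, but from there you reach for an extension-theoretic plan (step (a) and (b)) which you acknowledge you cannot complete, and which indeed would require genuinely new input that the paper does not supply. The paper never touches the local cohomology of $M$ directly; the hypothesis on $\can M1$ is used only to find a good parameter.

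Here is the actual mechanism you are missing. First replace $M$ by its unmixed quotient $\unmix M$ (Lemma~\ref{L:pcanunm} shows $\can{\unmix M}1\sub\can M1$, so positive depth persists, and $\can{\unmix M}0\iso\can M0$). Now pick a parameter $x$ regular on $\can M1$; by unmixedness it is also $M$-regular. Applying the $\delta$-functor long exact sequence (Proposition~\ref{P:deltapcan}) to $0\to M\xrightarrow{x}M\to \bar M\to 0$ and using Grothendieck vanishing to kill $\can{\bar M}0$ (since $\dim\bar M=2<3$), one gets
\[
0 \to \can M{}\xrightarrow{\ x\ }\can M{}\to \can{\bar M}1\to \can M1\xrightarrow{\ x\ }\can M1,
\]
and the injectivity of the last map (this is exactly where the hypothesis is spent) gives $\can M{}/x\can M{}\iso\can{\bar M}1 = \canring{\bar M}0{\bar R}$. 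The dimension-two input (Corollary~\ref{C:dim2}: over a two-dimensional complete local ring, $\can M{}$ is always MCM) then shows this quotient has depth two over $\bar R:=R/xR$, hence $\can M{}$ has depth three. Your ``Nakayama after dualizing back'' instinct is not wrong as a vague pointer, but the concrete realization is base change modulo a well-chosen parameter together with the $\delta$-functor structure on the $\can \cdot i$ and a pre-established dimension-two result --- none of which your sketch assembles, and the extension class you hope to construct in step (a) is never needed.
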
 
In fact, if $M$ satisfies the above assumption, then $\can M0$ is an MCM.  We turn now to \ch\ $p>0$ to construct such modules $M$.

\subsection{The \fr\ of a module}
Let $\frob q$ be the Frobenius morphism $x\mapsto x^q$, where $q=p^n$ is some power of the \ch\  (when $q$ is clear, we drop reference to it). Given an $R$-module $M$, we denote its pull-back via $\frob q$ by $\frob{q*}M$ or just $\frob*M$ and call it the \emph\fr\ of $M$; its elements are denoted by $*m$, and its $R$-module structure is then given by $r{*}m:=*r^qm$. If $k$ is perfect, then $R$ is \emph{F-finite},  and hence every \fr\  $\frob*M$ is again finitely generated. If we endow $\frob*R$ with the multiplication it inherits from $R$, namely $(*a)\cdot(*b):=*ab$, then it becomes an $R$-algebra, and its structure \homo\ is just another incarnation of the  Frobenius \homo. Our second result, in arbitrary dimension,  is:

\begin{proposition}\label{P:dirsumposdep}
If  there exists a direct sum decomposition  $\frob*Q\iso Q\oplus M$, for some   $R$-modules $Q$  and $M$, then $M$ satisfies the condition in  \Thm{T:pcanMCM}, that is to say,   $\can M1$ has positive depth.
\end{proposition}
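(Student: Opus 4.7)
The plan is to apply $\hlc\cdot{d-1}\maxim$ and Matlis duality to reduce the statement to an $\maxim$-divisibility identity, then exploit the Frobenius-splitting structure.

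First, apply the functor $\hlc\cdot{d-1}\maxim$ to the given splitting $\frob*Q\iso Q\oplus M$. Frobenius pullback commutes with local cohomology, since both are computed from the Cech complex on a system of parameters $x_1,\ldots,x_d$ and the ideals $(x_1,\ldots,x_d)$, $(x_1^q,\ldots,x_d^q)$ share the same radical $\maxim$; this yields $\hlc{\frob*Q}{d-1}\maxim\iso\frob*\hlc Q{d-1}\maxim$. Writing $B:=\hlc Q{d-1}\maxim$ and $A:=\hlc M{d-1}\maxim$, additivity gives $\frob*B\iso B\oplus A$, and Matlis-dualizing yields $\can{\frob*Q}1\iso\can Q1\oplus\can M1$.

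Next, positive depth of $\can M1=A^\vee$ is equivalent to the vanishing of its socle, which by the standard calculation $\hom R k{A^\vee}\iso(A/\maxim A)^\vee$ translates into the identity $A=\maxim A$. In the twisted action $r\cdot(*b)=*r^qb$ on $\frob*B$, one computes $\maxim\cdot\frob*B=*\maxim^{\{q\}}B$, where $\maxim^{\{q\}}B$ denotes the subgroup of $B$ generated by $\{r^qb:r\in\maxim,b\in B\}$. Let $F:B\to B$ be the Frobenius-semilinear endomorphism (satisfying $F(rb)=r^qF(b)$) corresponding by functoriality to the given $R$-linear section, and $G:B\to B$ the $q^{-1}$-semilinear retraction (satisfying $G(r^qb)=rG(b)$) with $G\circ F=\op{id}_B$, so that $B=F(B)\oplus\ker G$ and $A$ is identified with $*\ker G$. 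A short diagram chase then shows that $A=\maxim A$ is equivalent to the inclusion $\ker G\subseteq\maxim^{\{q\}}B$.

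The main obstacle is verifying this inclusion, which is the technical heart of the argument. My approach would combine (a) the fact that every element of the local cohomology module $B$ is $\maxim$-power torsion, (b) the semilinearity relation $G(r^qb)=rG(b)$, which makes $\ker G$ stable under multiplication by $q$-th powers, and (c) a careful manipulation at the level of Cech cocycles exploiting the explicit structure of local cohomology. Iterating Frobenius, which refines the decomposition into $\frob{p^n*}Q\iso Q\oplus M\oplus\frob*M\oplus\cdots\oplus\frob{p^{n-1}*}M$ and allows $F$ to be replaced by arbitrarily high powers $F^n$, should provide the extra flexibility needed to express an arbitrary $k\in\ker G$ as a sum $\sum r_i^qk_i'$.
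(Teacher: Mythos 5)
Your setup is correct and in fact takes a route that is genuinely different from the paper's: rather than first proving the (non\-trivial) isomorphism $\frob*(\can Qi)\iso\can{\frob*Q}i$ and then counting lengths of maximal Artinian submodules, you apply $\hlc\cdot{d-1}\maxim$ and Matlis duality directly to the given splitting, identify $A:=\hlc M{d-1}\maxim$ with $*\ker G$ inside $\frob*B$ (where $B:=\hlc Q{d-1}\maxim$), and correctly translate positive depth of $\can M1=\matlis A$ into the divisibility statement $\maxim A=A$, equivalently $\ker G\subseteq\maxim^{\{q\}}B$. Up to this point the reductions are sound (in particular the use of $\hlc{\frob*Q}{d-1}\maxim\iso\frob*\hlc Q{d-1}\maxim$ matches \Thm{T:loccohFrob}, and the $q$\mbox{-}semilinear projection argument does show that $\ker G\subseteq\maxim^{\{q\}}B$ suffices).

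However, you stop exactly where the real work starts, and you say so yourself: the inclusion $\ker G\subseteq\maxim^{\{q\}}B$ is never proved. The three ingredients you offer---$\maxim$\mbox{-}power torsion of $B$, $q^{-1}$\mbox{-}semilinearity of $G$, and ``careful manipulation at the level of Cech cocycles''---do not assemble into an argument, and iterating Frobenius does not obviously supply the missing step. That inclusion is essentially the whole content of the proposition, and the paper's own proof reaches it only through two substantial detours that are entirely absent from your sketch: (i) a base change to a perfect residue field, which is needed because in general $\mul{\frob*X}0=\iota_q(R)\cdot\mul X0$, so the length count gives $h(\frob*Q)=\iota_q(R)\,h(Q)$ rather than $h(\frob*Q)=h(Q)$, and one cannot conclude $h(M)=0$ without first forcing $\iota_q(R)=1$; and (ii) the isomorphism $\frob*(\matlis\Omega)\iso\matlis{(\frob*\Omega)}$ of \Thm{T:matlisFrobfuncreg}, whose proof occupies all of \S\ref{s:frcan} via a delicate \persym{}\mbox{-}matrix analysis of how the \fr\ acts on the injective hull. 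Until you either reproduce that machinery or produce a genuinely elementary replacement at the cocycle level, the argument has a real gap at its technical heart.
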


Recall that $R$ is called \emph{F-pure} if $\frob q$ (whence $\frob p$) is pure, which  in the complete case, is equivalent with $R$ being a direct summand of $\frob*R$.\footnote{\label{f:Enescu}I am grateful to Florian Enescu for providing me the following argument for the converse:  suppose $s\colon R\to \frob*R$ is the embedding as a direct summand, and let $s(1):=*c$; then $s$ is the composition of the (Frobenius) \homo\ $R\to \frob*R$ followed by multiplication with $*c$, and as $s$ is split, so is then the former \homo, which is the definition of being F-split.}
We substantially weaken this condition by calling $R$ \emph{weakly F-split}, if it is F-finite and there exists some module $Q$ which is a direct summand of its \fr\ $\frob*Q$ (we will then call such a $Q$ \emph{F-split}; but be aware that there is in general no longer a morphism from $Q$ to its \fr\ $\frob *Q$). So we proved  (note that there are plenty of non-\CM\ F-pure rings):

\begin{maintheorem}\label{T:wFpureMCM}
In dimension three, any complete weakly F-split ring, whence in particular any complete F-pure ring, admits a small MCM.\qed
 \end{maintheorem} 
 
\subsection{Proof of \Prop{P:dirsumposdep}}
The proof requires two ingredients.  Firstly, we prove (\S\ref{s:frpcan}) that in general, we have an isomorphism
\begin{equation}\label{eq:frpcan}
\frob*(\can Mi)\iso \can{\frob*M}i.
\end{equation} 
Note, however, that this is a non-canonical isomorphism, which therefore requires some work. The second ingredient is the behavior of ordinal length under \fr{s}. However, for the  case at hand, we do not need to rely on ordinal length (a transfinite version of ordinary length studied in \cite{SchOrdLen}), and so we can make the following ad hoc observations: let $\mul M0$ be the length of zero-th local cohomology module $\hlc M0\maxim$. In particular, $M$ has positive depth \iff\ $\mul M0=0$. We show (\Cor{C:lenfr}) that if $k$ is perfect, then 
\begin{equation}\label{eq:lenfr}
\mul M0=\mul{\frob*M}0.
\end{equation} 
Now define $h(M):=\mul{\can M1}0$, so that the condition in \Thm{T:pcanMCM} is equivalent with $h(M)=0$. Suppose now that $R$ is weakly F-split, witnessed by an F-split module $Q$, that is to say, a decomposition $\frob*Q\iso Q\oplus M$. Since depth is preserved under faithfully flat descent, we can make a base change (a scalar extension $R\to \scal R{k^{1/p^\infty}}$, with $k^{1/p^\infty}$ the perfect hull of the residue field $k$,  in the sense of \cite[\S3]{SchClassSing}) so that the residue field becomes perfect. 
Since $h$ is additive on direct sums, we get $h(\frob*Q)=h(Q)+h(M)$. On the other hand, we have
\begin{equation}\label{eq:hpcan}
h(\frob*Q)=\mul{\can{\frob*Q}1}0\overset{\eqref{eq:frpcan}}=\mul{\frob*(\can Q1)}0\overset{\eqref{eq:lenfr}}=\mul{\can Q1}0=h(Q)
\end{equation} 
from which it follows that $h(M)=0$.\qed

We will prove \Thm{T:pcanMCM} in \S\ref{s:pcan}, whereas sections \S\ref{s:lenfr} and \S\ref{s:frcan} are then devoted to proving respectively \eqref{eq:lenfr} and \eqref{eq:frpcan}, completing the proof of our main theorem.  In \S\ref{s:examp}, we give a few examples (without proof) of F-split modules. However, as calculations blow up fast, we can only give examples over hypersurfaces, which are of course already \CM. We should point out that our use of Frobenius to obtain small MCM's  is different from  the graded case  as explained in \cite{HoInt} (for a proof, see \cite{HoCurr}), and also different from the toric case  \cite{SchToric}.

\section{Higher \pcan\ modules}\label{s:pcan}
In this section,  we fix a complete local ring $(R,\maxim)$ of dimension $d$. We also need to work occasionally with non-finitely generated (aka \emph{big}) modules and to emphasize this, we will denote them by capital Greek letters. 
Let $k$ be the residue field of $R$ and $E$ the injective hull of $k$. Recall that the \emph{Matlis dual} of a (big) $R$-module $\Omega$ is given by
$$
\matlis \Omega:=\hom R\Omega E
$$
It is an exact, contravariant functor, sending Noetherian modules to Artinian ones, and vice versa, and these are then canonically isomorphic to their biduals    (see, for instance, \cite[\S18]{Mats}). In particular, the Matlis dual of a module of finite length has again finite length, and in fact, the same length.

\begin{definition}\label{D:pcan}
Given a (finitely generated) $R$-module $M$, let us define its \emph{$i$-th \pcan\ module}, for $i=\range 0d$, by the rule 
$$
\can  Mi:=\matlis{\hlc M{d-i}\maxim }.
$$ 
\end{definition}
If we want to emphasize the base ring $R$, we will denote  these modules by $\canring MiR$. 
If   $i=0$, we just write $\can M{}$ for $\can  M0$.  Since each $\hlc M\bullet\maxim$ is Artinian, the $\can Mi$ are all finitely generated $R$-modules. By Grothendieck vanishing, if $s$ and $r$ are the respective depth and dimension of $M$, then $d-r$ and $d-s$ are respectively  the smallest and largest index $i$ for which $\can Mi\neq 0$. In particular, $M$ is an MCM \iff\ $\can Mi=0$, for all $i\geq 1$. 
The following fact will help us in studying \pcan\ modules, as it often allows us to reduce to the \CM\ case.

\begin{proposition}\label{P:finMatlisdual}
Let $\varphi\colon (S,\mathfrak n)\to (R,\maxim)$ be a finite morphism of complete local rings of relative dimension $r:=\dim S-\dim R$. Let $\Omega$ be an $R$-module, and let $\varphi_*\Omega$ be its pull-back along $\varphi$, that is to say, viewed as an $S$-module.
\begin{enumerate}
\item\label{i:finMat}  The Matlis $R$-dual of   $\Omega$  is   isomorphic over $S$ to the Matlis $S$-dual of $\varphi_*\Omega$, and $\hlc \Omega i\maxim\iso \hlc{\varphi_*\Omega}i{\mathfrak n} $ as $S$-modules, for all $i$. 
 \item\label{i:finpcan} For all $i$, we have an isomorphism of $S$-modules $\canring \Omega iR\iso\canring{\varphi_*\Omega}{r+i}S$.
\item\label{i:finCM} There always exists at least one local Gorenstein (whence \CM) ring $S$ and a finite morphism $\varphi\colon S\to R$ of relative dimension zero, and hence in this situation we have isomorphisms of $S$-modules 
$$\canring \Omega iR\iso\canring{\varphi_*\Omega}{i}S.$$ 
\end{enumerate}
\end{proposition}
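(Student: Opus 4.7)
The plan is to establish (i) and (ii) by transferring Matlis duality and local cohomology along $\varphi$, and to deduce (iii) from the Cohen structure theorem.

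For (i), I would invoke the standard identification $E\iso \hom S R{E_S}$, where $E_S$ denotes the injective hull of the residue field of $S$. This follows from uniqueness of the injective hull over a complete local ring: $\hom S R{E_S}$ is $R$-injective (Hom into an $S$-injective is $R$-injective) and its socle is easily seen to be $k$, so it must coincide with $E$. With this in hand, Hom-tensor adjunction gives
\begin{equation*}
\matlis{\Omega}=\hom R\Omega E\iso \hom R\Omega{\hom S R{E_S}}\iso \hom S\Omega{E_S}=\matlis{(\varphi_*\Omega)},
\end{equation*}
an isomorphism of $S$-modules. For local cohomology, finiteness of $\varphi$ forces $\mathfrak n R$ to be $\maxim$-primary, so $\hlc \Omega i\maxim=\hlc \Omega i{\mathfrak n R}$; invariance of local cohomology under restriction of scalars (it is computed by the direct limit of Koszul complexes on any generating set of the ideal) then identifies this with $\hlc{\varphi_*\Omega}i{\mathfrak n}$ over $S$.

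For (ii), set $d:=\dim R$, so $\dim S=d+r$. A direct computation using both parts of (i) yields
\begin{equation*}
\canring\Omega iR=\matlis{\hlc \Omega{d-i}\maxim}\iso\matlis{\hlc{\varphi_*\Omega}{d-i}{\mathfrak n}}=\matlis{\hlc{\varphi_*\Omega}{(d+r)-(r+i)}{\mathfrak n}}=\canring{\varphi_*\Omega}{r+i}S.
\end{equation*}
Part (iii) follows from Cohen's structure theorem combined with Noether normalization for complete local rings: every complete Noetherian local ring $R$ of dimension $d$ contains a complete regular local subring $S$ of the same dimension $d$ over which $R$ is module-finite (power series over a coefficient field in equal \ch, over a Cohen ring in mixed \ch). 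Since regular local rings are Gorenstein and $\dim S-\dim R=0$, the stated isomorphism is precisely the specialization $r=0$ of (ii).

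The only real obstacle is the identification $E\iso\hom S R{E_S}$, but this is a standard fact about Matlis duality over complete local rings (see, e.g., \cite{Mats}). Once it is in place, the remainder of the proposition reduces to Hom-tensor adjunction, invariance of local cohomology under a finite change of base, and classical Cohen structure theory; the whole result can be viewed as the natural compatibility of the \pcan\ construction with finite extensions, whose value lies in the possibility (via (iii)) of computing $\can Mi$ over a Gorenstein base, where more tools are available.
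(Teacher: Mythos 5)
Your treatment of parts (i) and (ii) is essentially the paper's: the identification $E\iso\hom S R{E_S}$ together with Hom-tensor adjunction (the paper instead first writes $\hom S R{E_S}\iso E^e$ and then pins down $e=1$ by a length count over $S$; your socle argument amounts to the same computation and is fine, though ``easily seen'' is doing some work there, since one must check that $\hom{k_S}{k}{k_S}$ is one-dimensional over $k$), and the standard invariance of local cohomology under a finite base change.

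There is, however, a genuine gap in your proof of (iii). You invoke ``Noether normalization for complete local rings'' in the form: every complete Noetherian local ring of dimension $d$ is module-finite over a complete regular local \emph{subring} of dimension $d$. This is correct in equal characteristic, but it fails in mixed characteristic when $p$ lies in a minimal prime of $R$. For instance, take $R=\mathbb Z_p[[x]]/(px)$, a one-dimensional complete local ring of characteristic $0$ with residue characteristic $p$. Any one-dimensional regular local $S\hookrightarrow R$ would be a complete DVR; characteristic $p$ is impossible since $\op{char}(R)=0$, and if $\op{char}(S)=0$ then its uniformizer $\pi$ satisfies $\pi^e= up$ in $S$, so its image in $R$ lies in the minimal prime $(p)$ and is therefore not a parameter, so $\mathfrak n_S R$ is not $\maxim$-primary and $R$ cannot be finite over $S$. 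The paper avoids this by not insisting on a \emph{regular} $S$: writing $R\iso T/I$ with $T$ regular by Cohen's structure theorem, it chooses a maximal regular sequence $x_1,\dots,x_h$ inside $I$ (with $h=\op{ht}(I)$) and sets $S:=T/(x_1,\dots,x_h)$, which is a complete intersection (hence Gorenstein) with $S\onto\cdots\onto R$... more precisely $S\to R$ finite surjective of relative dimension zero. This works in all characteristics and is exactly why the statement asks only for $S$ Gorenstein, not regular. Replace your Noether-normalization step with this construction and the rest of your argument goes through.
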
 
\begin{proof}
 Let $E_S$ and $E_R$ be the injective hull of the respective residue fields of $S$ and $R$, and let  $\matlisring R\Omega:=\hom R\Omega{E_R}$ and $\matlisring S{\Omega}:=\hom S{\varphi_*\Omega}{E_S}$   be the respective Matlis duals of $\Omega$ and its pull-back.  Since $\hom SR{E_S}$ is injective as an $R$-module by \cite[Lemma 3.1.6]{BH} and is supported only at $\maxim$, it must be a power of $E_R$, say $\hom SR{E_S}\iso E_R^e$, for some $e\in\nat$. Hence we have isomorphisms of $R$-modules
 $$
 (\matlisring R \Omega)^e=\hom R\Omega{E_R}^e\iso \hom R\Omega{\hom SR{E_S}}\iso \hom S{\varphi_*\Omega}{E_S}=\matlisring S{\Omega}.
 $$
So remains to show that $e=1$. Let $k$ be the residue field of $R$ and let $n$ be the length of its pull-back $\varphi_*k$ viewed as an $S$-module. As an $S$-module, the Matlis dual $\matlisring S{k}$   has the same length $n$, and is isomorphic to $(\matlisring R k)^e$ by the above. Since $k\iso \matlisring R k$, the   length as an $S$-module of $\varphi_*(\matlisring R k)^e$   is equal to $en$, so that $n=en$, whence $e=1$.

The analogue statement for local cohomology is well-known (see, for instance, \cite[\S3.5(3)]{BH}), so that combining these two facts proves \eqref{i:finpcan}. As for the last assertion, by Cohen's structure theorems, we can find  a regular local ring $T$ such that $R\iso T/I$ for some height $r$ ideal $I\sub T$. Since $T$ is \CM, we can find  a regular sequence $\rij xh$ inside $ I$. In particular, $S:=T/\rij xhT$ is Gorenstein  and $S\to R$ has relative dimension zero. 
\end{proof}

If $R$ is \CM, then $\can R{}$ is just its canonical module by Grothendieck duality.  Without the \CM\ assumption, we only have 
\begin{equation}\label{eq:toppcan}
\can M{}\iso \hom{}M{\can R{}},
\end{equation} 
which follows from applying Matlis duality to the isomorphism $M\tensor \hlc Rd\maxim\iso \hlc Md\maxim$. If $S\to R$ is as in \eqref{i:finCM}, then $\can R{}\iso\hom SRS$, since $S$ is then its own canonical module. Nonetheless, the \pcan\ modules still form a contravariant, additive $\delta$-functor in the sense of Grothendieck \cite{GrotHom}, that is to say:

\begin{proposition}\label{P:deltapcan}
Given an exact sequence $\Exactseq NMQ$, we get  canonically defined transition maps $\partial_i\colon \can Ni\to \can Q{i+1}$ that fit into a long exact sequence  
\begin{multline*}
0\to \can Q0\to\can M0\to\can N0\map{\partial_0}\can Q1\to\can M1\to 
\dots \\ \dots\to\can N{d-1}\map{\partial_{d-1}}\can Qd \to \can Md\to\can Nd\to 0.
\end{multline*}
\end{proposition}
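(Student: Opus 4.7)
The plan is to obtain the stated long exact sequence simply by dualizing the standard long exact sequence in local cohomology attached to $\Exactseq NMQ$. Concretely, first I would apply the local cohomology functors $\hlc{-}i\maxim$ to the given short exact sequence, producing the long exact sequence
\begin{multline*}
0\to \hlc N0\maxim\to\hlc M0\maxim\to\hlc Q0\maxim\to\hlc N1\maxim\to\cdots\\
\cdots\to\hlc N{d-1}\maxim\to\hlc M{d-1}\maxim\to\hlc Q{d-1}\maxim\to\hlc Nd\maxim\to\hlc Md\maxim\to\hlc Qd\maxim\to 0,
\end{multline*}
where the vanishing outside the range $[0,d]$ is Grothendieck vanishing together with the fact that $\hlc{-}i\maxim=0$ for $i<0$. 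The connecting maps $\delta^i\colon\hlc Qi\maxim\to\hlc N{i+1}\maxim$ in this sequence are the canonical ones coming from the snake lemma.

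Second, I would apply the Matlis dual functor $\matlis{(-)}=\hom R{-}E$ to this long exact sequence. Since $E$ is injective, $\matlis{(-)}$ is an exact contravariant functor, so the dualized sequence remains exact while reversing all arrows. Reindexing via the definition $\can{-}i=\matlis{\hlc{-}{d-i}\maxim}$, and substituting $j=d-i$, the dualized sequence becomes exactly the claimed long exact sequence
\begin{multline*}
0\to \can Q0\to\can M0\to\can N0\map{\partial_0}\can Q1\to\can M1\to\can N1\map{\partial_1}\cdots\\
\cdots\to\can N{d-1}\map{\partial_{d-1}}\can Qd\to\can Md\to\can Nd\to 0,
\end{multline*}
with the transition map $\partial_i\colon\can Ni\to\can Q{i+1}$ defined as the Matlis dual of the connecting \homo\ $\delta^{d-i-1}\colon\hlc Q{d-i-1}\maxim\to\hlc N{d-i}\maxim$.

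Third, for the $\delta$-functor aspect (canonicity and functoriality), it suffices to invoke two naturality statements: the local cohomology long exact sequence is natural in morphisms of short exact sequences (standard consequence of the snake lemma applied to injective resolutions or \v{C}ech complexes), and Matlis duality is a functor on the module category. Composing these, a map of short exact sequences of $R$-modules induces a map of the resulting long exact sequences of \pcan\ modules, and in particular the $\partial_i$ are natural.

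There is no real obstacle here; the content of the proposition is just the combination of two exact functors, namely $\hlc{-}\bullet\maxim$ (a $\delta$-functor) and $\matlis{(-)}$ (an exact contravariant functor). The only thing that merits care is the reindexing $j=d-i$, which converts the covariant $\delta$-functor structure of local cohomology into the contravariant $\delta$-functor structure asserted here.
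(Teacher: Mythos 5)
Your proposal is correct and is exactly the paper's argument, which the paper states as a one-liner: ``Immediate from the long exact sequence of local cohomology and the exactness of Matlis duality.'' You simply spell out the reindexing $j=d-i$ and the naturality, which the paper takes as understood.
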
 
\begin{proof}
Immediate from the long exact sequence of local cohomology and the exactness of Matlis duality.
\end{proof}
\begin{remark}\label{R:deltapcan}
Note, however, that the $\can \cdot\bullet$ are not derived functors if $R$ is not \CM. Put differently, we do no longer have Grothendieck duality: in general $\can Mi$ is different from $\ext RiM{\can R{}}$; see \Prop{P:dimpcan} below.
\end{remark}

Let us call an element $m$  in a module $M$ \emph{small}, if $\dim{R/\ann{}m}$, that is to say the dimension of the module generated by $m$, is strictly smaller than the dimension of $M$ itself. The subset of all small elements forms a submodule, denoted $\fl s (M)$, and the resulting quotient $\unmix M:= M/\fl s(M)$ is called the \emph{unmixed quotient} of $M$. Let us call a module \emph{unmixed}, if $\fl s(M)=0$. It is easy to see that $\unmix M$ is unmixed, and in fact, it is the largest unmixed quotient of $M$.

\begin{proposition}\label{P:dimpcan}
Let $d:=\dim R$ and $M$ a $d$-dimensional module. Then $\can M{}$ is unmixed of dimension $d$ and $\dim{\can Mi}\leq d-i$, for all $i\geq 1$. Moreover, if $S\to R$ is finite of relative dimension zero and $S$ is \CM, then we have natural isomorphisms of $S$-modules
\begin{equation}\label{eq:Grdualpcan}
\can Mi\iso \ext {S}iM{\can R{}}
\end{equation} 
for all $i$, where for simplicity, we also wrote $M$ for its pull-back as an $S$-module. 
\end{proposition}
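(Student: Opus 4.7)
The plan is to reduce the whole statement to a local-duality computation over a Gorenstein base. First, I would invoke \Prop{P:finMatlisdual}\eqref{i:finCM} to pick a finite morphism $\varphi\colon S\to R$ of relative dimension zero with $S$ Gorenstein; by \eqref{i:finpcan}, $\can Mi\iso \canring MiS$ as $S$-modules, and, applied with $\Omega=R$, the same result identifies $\can R{}\iso\hom SR{\omega_S}$ with the $J$-torsion submodule $(0:_{\omega_S}J)$ of $\omega_S$, where $J:=\ker\varphi$. Thus everything reduces to an assertion about modules over the better-behaved base $S$.

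Second, over the Gorenstein local ring $S$, Grothendieck's local duality supplies a natural isomorphism $\matlisring S{\hlc N{\dim S-i}{\mathfrak n}}\iso\ext SiN{\omega_S}$ for every finite $S$-module $N$. Specialising to $N=M$ at once yields $\can Mi\iso\ext SiM{\omega_S}$. To recast this as \eqref{eq:Grdualpcan} with $\can R{}$ in place of $\omega_S$, I would exploit that $JM=0$: every $S$-linear map out of $M$ automatically lands in the $J$-torsion of its target, so the inclusion $\can R{}\hookrightarrow\omega_S$ induces an isomorphism $\hom SM{\can R{}}\iso\hom SM{\omega_S}$. Promoting this identification to the higher Ext groups is the main technical hurdle; I would handle it by arguing that the $J$-torsion subcomplex of an injective $S$-resolution of $\omega_S$ is itself an injective $R$-resolution of $\can R{}$, so that applying $\hom SM-$ termwise realises both $\ext SiM{\omega_S}$ and $\ext SiM{\can R{}}$ as the cohomology of the same complex.

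Third, the dimension and unmixedness assertions then fall out of classical Cohen-Macaulay duality theory over $S$. For any finite $S$-module $N$ of dimension $d=\dim S$, the module $\hom SN{\omega_S}$ is unmixed of dimension $d$, its associated primes being exactly the $d$-dimensional minimal primes of $\op{Supp}(N)$, while $\dim{\ext SiN{\omega_S}}\le d-i$ for every $i\ge1$ (see, e.g., \cite[\S3.3]{BH}). Specialising to $N=M$ gives at once that $\can M{}\iso\hom SM{\omega_S}$ is unmixed of dimension $d$ and that $\dim{\can Mi}\le d-i$ for $i\ge1$, completing the proof modulo the technical step of paragraph~2, which remains the real obstacle since the straightforward ``Hom-level'' factorisation through the $J$-torsion has to be kept intact after taking derived functors.
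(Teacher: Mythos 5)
Your proposal takes the same overall route as the paper: reduce to a Gorenstein cover $S\to R$ of relative dimension zero via Proposition~\ref{P:finMatlisdual}, identify $\can Mi$ with the local-duality dual over $S$, and read off the dimension and unmixedness assertions from Cohen--Macaulay duality theory over $S$ (cf.\ \cite[Cor.~3.5.11]{BH}). Where you diverge is in treating, as a ``main technical hurdle,'' the replacement of $\omega_S$ by $\can R{}=\hom SR{\omega_S}$ in the Ext, and the bridge you propose does not hold up. If $\omega_S\to I^\bullet$ is an injective $S$-resolution, the $J$-torsion subcomplex $\hom SR{I^\bullet}$ is a complex of \emph{$R$}-injectives, not $S$-injectives, so applying $\hom SM{-}$ to it and taking cohomology computes $\ext RiM{\can R{}}$, not $\ext SiM{\can R{}}$. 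Moreover $\hom SR{I^\bullet}$ is not even a resolution of $\can R{}$: its cohomology is $\ext SiR{\omega_S}$, which vanishes for $i>0$ precisely when $R$ is Cohen--Macaulay over $S$ --- exactly the hypothesis this paper wants to avoid. So this step of your argument would fail in the non-CM case.

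That said, the obstacle you flagged is a real one in the statement as printed, and the paper's own proof elides it: it simply declares that ``$\can R{}$ is just the canonical module of $S$'' and invokes Grothendieck duality, which is literally incorrect since $\can R{}\iso\hom SR{\omega_S}$ and $(0:_{\omega_S}J)=\omega_S$ would force $J=0$. The intended reading of \eqref{eq:Grdualpcan} is clearly $\can Mi\iso\ext SiM{\omega_S}$ (equivalently $\ext SiMS$ when $S$ is Gorenstein), which \emph{is} exactly local duality over $S$. With that reading your first paragraph suffices, your ``hurdle'' evaporates, and your third paragraph --- dimension bound from \cite[Cor.~3.5.11]{BH} and unmixedness of $\hom SM{\omega_S}$ via associated primes --- correctly finishes the proof, matching the paper's argument modulo its loose phrasing.
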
 
\begin{proof}
By \eqref{i:finpcan}, since the relative dimension is zero, we may view the $\can Mi$ as the \pcan\ $S$-modules of the pull-back (justifying     reference omission to the base ring). In particular, as such, $\can R{}$ is just the canonical module of $S$, and the isomorphisms~\eqref{eq:Grdualpcan} are now just Grothendieck duality over $S$ (see also \cite[Exercise 3.5.14]{BH}. Moreover, by \eqref{i:finCM}, a morphism $S\to R$ as above always exist. It is well-known (\cite[Corollary 3.5.11]{BH}) that $\ext {S}iM{\can R{}}$ has dimension at most $d-i$, whence our last claim (note that the dimension of a module does not depend on the base ring). To prove the first assertion, by \eqref{eq:toppcan}, we only need to show that $\can R{}$ is unmixed as an $R$-module, but this is the same as being unmixed as an $S$-module, which follows since the latter is the canonical module of $S$. 
%
%
\end{proof}

\begin{lemma}\label{L:pcanunm}
For any $R$-module $M$, we have a canonical isomorphism $\can M{}\iso \can{\unmix M}{}$ and a natural embedding $\can {\unmix M}1\sub\can M1$. 
\end{lemma}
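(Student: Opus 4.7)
The plan is to feed the canonical short exact sequence
$$0 \to \fl s(M) \to M \to \unmix M \to 0$$
into the contravariant $\delta$-functor long exact sequence of \Prop{P:deltapcan}. Naturality of both claims in the lemma will then be automatic from the naturality of that construction, so the entire task reduces to a single vanishing statement, namely $\can{\fl s(M)}{}=0$.

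To see this, I would first verify that $\dim \fl s(M) < \dim M$. Since $M$ is finitely generated, so is $\fl s(M)$; choose generators $\range{m_1}{m_n}$. Each $m_j$ is small by construction, so $\dim(R/\ann R{m_j}) < \dim M$. As $\ann R{\fl s(M)} = \bigcap_j \ann R{m_j}$, its minimal primes lie among those of the various $\ann R{m_j}$, whence $\dim \fl s(M)$ is bounded by the maximum of the $\dim(R/\ann R{m_j})$, which is strictly less than $\dim M$. Combining this with $\dim M \leq d$ and Grothendieck vanishing yields $\hlc{\fl s(M)}d\maxim = 0$, so that $\can{\fl s(M)}{} = \matlis{\hlc{\fl s(M)}d\maxim} = 0$.

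Plugging this vanishing into \Prop{P:deltapcan} gives the long exact sequence
$$0 \to \can{\unmix M}{} \to \can M{} \to 0 \to \can{\unmix M}1 \to \can M1 \to \can{\fl s(M)}1 \to \cdots,$$
from which both halves of the lemma read off at once: exactness at the third term yields the canonical isomorphism $\can M{} \iso \can{\unmix M}{}$, and exactness at $\can{\unmix M}1$ yields the natural embedding $\can{\unmix M}1 \sub \can M1$. The only substantive point is the dimension bookkeeping for $\fl s(M)$; once that is settled, the rest is purely formal, driven entirely by Matlis duality applied to the long exact sequence of local cohomology.
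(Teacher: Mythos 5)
Your proof is correct and follows the same route as the paper: feed the short exact sequence $0 \to \fl s(M) \to M \to \unmix M \to 0$ into \Prop{P:deltapcan} and invoke Grothendieck vanishing to kill $\can{\fl s(M)}{}$. The only difference is that you spell out the dimension count $\dim{\fl s(M)} < \dim M \leq d$ (via minimal primes of $\bigcap_j \ann{}{m_j}$), which the paper leaves implicit when it cites Grothendieck vanishing; this is a harmless and welcome bit of added detail.
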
 
\begin{proof}
By \Prop{P:deltapcan},   the exact sequence $\Exactseq {\fl s(M)}M{\unmix M}$ gives rise to a long exact sequence
$$
0\to \can{\unmix M}{}\to \can M{}\to \can{\fl s(M)}{}\to \can{\unmix M}1\to\can M1
$$
By Grothendieck vanishing, $\can{\fl s(M)}{}=0$, and the assertion follows.
\end{proof} 

Our first application is an abundant source of MCM's in dimension two:

\begin{corollary}\label{C:dim2}
If $R$ is a two-dimensional complete local ring and $M$ any two-dimensional $R$-module, then $\can M{}$ is an MCM.
\end{corollary}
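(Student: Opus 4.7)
My plan is to reduce to a Gorenstein base ring via \Prop{P:finMatlisdual}\eqref{i:finCM} and then apply, twice, the elementary inequality
\[
\op{depth}_S\hom SXN\geq \min(\op{depth}_S N,\,2),
\]
valid for any finitely generated modules $X,N$ over a Noetherian local ring $S$. Concretely, I would first fix (by \Prop{P:finMatlisdual}\eqref{i:finCM}) a finite local morphism $\varphi\colon S\to R$ of relative dimension zero with $S$ a two-dimensional Gorenstein local ring. Writing $\omega:=\can R{}$, which equals $\hom SRS$ as observed right after \eqref{eq:toppcan}, the degree-$0$ case of \eqref{eq:Grdualpcan} in \Prop{P:dimpcan} identifies $\can M{}$ with $\hom SM\omega$ as $S$-modules. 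Since $\varphi$ is finite, depths over $S$ and over $R$ coincide, so it suffices to show $\op{depth}_S \hom SM\omega\geq 2$.

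Next, I would verify the Hom--depth bound by a routine depth-chase. Take a free presentation $S^a\to S^b\to X\to 0$ and dualize into $N$ to get $0\to \hom SXN\to N^b\to N^a$; let $K\sub N^a$ denote the image. When $\op{depth}N\geq 2$, the submodule $K$ has positive depth (being contained in $N^a$, whose associated primes avoid $\maxim$), and the standard depth lemma applied to the short exact sequence $0\to \hom SXN\to N^b\to K\to 0$ yields $\op{depth}\hom SXN\geq \min(\op{depth}N^b,\,\op{depth}K+1)\geq 2$.

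Finally, I would apply this bound twice in succession. First, with $(X,N)=(R,S)$ and $\op{depth}_S S=2$, it gives $\op{depth}_S \omega\geq 2$. Then, with $(X,N)=(M,\omega)$, it gives $\op{depth}_S\can M{}\geq 2$. Combined with $\dim \can M{}=2$ from \Prop{P:dimpcan}, this forces $\op{depth}\can M{}=2=\dim R$, so $\can M{}$ is an MCM. No step is a serious obstacle here: once the reduction-to-Gorenstein machinery of the preceding propositions is in place, the argument collapses to the routine Hom--depth inequality. The only mildly non-obvious move is the two-stage bootstrap, first using the inequality to promote $\op{depth}_S S=2$ into $\op{depth}_S\omega\geq 2$, and only then applying it to $M$.
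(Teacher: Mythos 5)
Your proof is correct, and it takes a genuinely different route from the paper's. The paper reduces to $M$ unmixed via \Lem{L:pcanunm}, picks an $M$-regular (hence $\can M{}$-regular) parameter $x$, and exploits the long exact sequence of \pcan\ modules from \Prop{P:deltapcan} together with the dimension-shift $\can{\bar M}1 = \canring{\bar M}0{\bar R}$ to show that $\can M{}/x\can M{}$ embeds into an unmixed $\bar R$-module, hence has positive depth; the extra regular element $x$ then yields depth two. You instead pass to the Gorenstein base $S$ of \Prop{P:finMatlisdual}\eqref{i:finCM}, use the degree-zero Grothendieck duality $\can M{}\iso\hom SM\omega\iso\hom SMS$ (the last equality because $M$ is killed by $\ker(S\to R)$, so every $S$-linear map $M\to S$ lands in $\hom SRS$), and invoke the standard Hom-depth bound. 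Your route is more elementary and self-contained: it amounts to recognizing that $\can M{}$ is a dual over a two-dimensional Gorenstein ring, hence a second syzygy, hence of depth at least two. The paper's route is heavier machinery here, but it is the one that feeds into the inductive proof of \Thm{T:pcanMCM} and \Rem{R:pcanMCM}, where one peels off a regular element to reach dimension two; the Hom-depth inequality caps out at two and does not bootstrap to dimension three in the same way. One small simplification to your writeup: the two-stage application of the Hom-depth bound is unnecessary — once you have $\can M{}\iso\hom SMS$ as $S$-modules, a single application with $N = S$ (and $\op{depth}_S S = 2$) suffices.
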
 
\begin{proof}
Put $Q:=\can M{}$, and since this is isomorphic to $\can{\unmix M}{}$ by \Lem{L:pcanunm}, we may already assume from the start that $M$  is unmixed. 
Let $x$ be a parameter (=element outside all maximal dimensional prime ideals) on $R$. By unmixedness,  it is $M$-regular. Put $\bar M:=M/xM$. The exact  sequence $\exactseq MxM{}{\bar M}$ yields, by \Prop{P:deltapcan} and Grothendieck vanishing,   a long exact sequence
$$
0=\can{\bar M}0\to Q\map x Q\to \can {\bar M}1\to\dots
$$
so that in particular, $Q/xQ\sub \can {\bar M}1$. If we let $\bar R:=R/xR$, then by \eqref{i:finpcan}, we have $\can{\bar M}1=\canring{\bar M}1R=\canring {\bar M}0{\bar R}$. By \Prop{P:dimpcan}, the latter is unmixed (as an $\bar R$-module), whence has depth at least one. Hence  $Q/xQ$, being a submodule,   has  also depth at least one. Since $x$ is  $Q$-regular, as $Q$   is unmixed of dimension two by \Prop{P:dimpcan},  we showed that $Q$ has depth at least two,  i.e., is MCM.
\end{proof}
\begin{remark}\label{R:dim2}
We actually proved that if $\dim M=\dim R\geq 2$, then $\can M{}$ has depth at least two.
\end{remark} 

\subsection{Proof of \Thm{T:pcanMCM}}
If $M$ is an MCM, then  $\can M1=0$ by Grothendieck vanishing (and the zero module has by definition infinite depth). For the converse, assume $M$ is a three-dimensional module such that $\can M1$ has positive depth.  Since $\can {\unmix M}1$ is a submodule of the latter by \Lem{L:pcanunm}, it too has positive depth, and so we may assume from the start that $M$ is unmixed. By assumption, we can find a parameter $x$ which is $\can M1$-regular. By unmixedness, it is also $M$-regular. Put $\bar R:=R/xR$ and $\bar M:=M/xM$. From the short exact sequence $\exactseq MxM{}{\bar M}$ we get by \Prop{P:deltapcan} and Grothendieck vanishing,   a long exact sequence
$$
0=\can{\bar M}{}\to \can M{}\map x \can M{}\to \can {\bar M}1\to \can M1\map x\can M1
$$
By assumption the latter map is injective, showing that $\can{\bar M}1\iso \can M{}/x\can M{}$. By \eqref{i:finpcan}, we have $\can{\bar M}1=\canring{\bar M}0{\bar R}$, and by \Cor{C:dim2}, this is an MCM over $\bar R$, that is to say, has depth two. Hence $\can M{}$ has depth three, whence is an MCM over $R$.\qed

\begin{remark}\label{R:pcanMCM}
Our argument actually shows that if $R$ is a $d$-dimensional complete local ring and $M$ a  $d$-dimensional $R$-module  such that $\can M1$ has positive depth, then $\can M{}$ has depth at least three.
\end{remark}

\section{The \fr\ of a module}\label{s:lenfr}

In this section,   $(R,\maxim)$  will always denote a Noetherian local ring of \ch\ $p>0$, with residue field $k$. Moreover, $q$ will always denote some power of $p$. We define its \emph{$q$-th degree of imperfection}  as $\iota_q(R):=(\frob*k:k)$,  which is the same as the (vector space) degree of $k$ over its subfield $k^q$ of all $q$-th powers, and also the same as  the degree of   the field of $q$-th roots $k^{1/q}$ over $k$. In particular, $k$ is perfect \iff\ $\iota_p(R)=1$. A standing assumption, moreover, will be that $R$ is \emph{F-finite}, meaning that the Frobenius is a finite morphism. In particular, all degrees of imperfection $\iota_q(R)$ will be finite, and the converse holds when $R$ is complete.  

We defined in the introduction the \fr\ functor $\frob{q*}$ on the category of $R$-modules (the F-finiteness assumption implies that $\frob*M$ is again finitely generated). Whenever  $q$ is clear from the context, we just denote it by $\frob*$. (A note of caution, do not confuse this with the    Peskine-Szpiro Frobenius functor  given by $\mathfrak F(M):=M\tensor_R\frob*R$.) Recall that $\frob*M$ is the $R$-module whose  elements are denoted by $*m$, for $m\in M$, and with the scalar action of $R$ given by $r{*}m:=*r^pm$.  An easy, but very important property of the \fr\ is its exactness.

 \begin{lemma}\label{L:locFrob}
Given a multiplicative set $\Sigma$, we have $\frob{*}(\inv \Sigma M)\iso \inv \Sigma (\frob{*}M)$.
\end{lemma}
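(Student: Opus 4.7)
The plan is to exhibit a natural $R$-linear map $\phi\colon \inv\Sigma(\frob*M)\to \frob*(\inv\Sigma M)$ by the explicit formula $({*}m)/s\mapsto {*}(m/s^q)$, and verify it is bijective. The conceptual motivation is that on the target, every $s\in\Sigma$ acts on the underlying set $\inv\Sigma M$ as multiplication by $s^q$ (since by definition $s\cdot{*}n={*}s^q n$), which is invertible because $s^q\in\Sigma$. Hence the canonical map ${*}m\mapsto {*}(m/1)$ from $\frob*M$ to $\frob*(\inv\Sigma M)$ factors uniquely through $\inv\Sigma(\frob*M)$ by the universal property of localization, and this factorization is precisely $\phi$; the formula passes the sanity check $s\cdot {*}(m/s^q)={*}s^q(m/s^q)={*}m$.

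Well-definedness, additivity, and $R$-linearity of $\phi$ are then direct computations, each amounting to tracking the $q$-th power that appears in the twisted scalar action of $\frob*$. Surjectivity is transparent since for any ${*}(m/s)$ in the target, one has ${*}(m/s)={*}(s^{q-1}m/s^q)=\phi(({*}s^{q-1}m)/s)$.

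The only step that requires genuine care is injectivity. Suppose $\phi(({*}m)/s)=\phi(({*}m')/s')$, i.e.\ $m/s^q=m'/s'^q$ in $\inv\Sigma M$. Then there is $u\in\Sigma$ with $us'^qm=us^qm'$ in $M$. Multiplying by $u^{q-1}$ yields $u^qs'^qm=u^qs^qm'$, which translates back through the twisted action to $us'\cdot {*}m=us\cdot {*}m'$ in $\frob*M$, forcing $({*}m)/s=({*}m')/s'$ in the source. The main technical point, and the only subtle one in the proof, is the mismatch between the equivalence relation in $\inv\Sigma(\frob*M)$, governed by witnesses of the form $t^q$, and the one in $\inv\Sigma M$, governed by plain witnesses $u$; it is resolved by the observation that $u^q\in\Sigma$ whenever $u\in\Sigma$. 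A slicker conceptual alternative would be to note that both $\frob*$ and $\inv\Sigma$ commute with filtered colimits (the former as restriction of scalars, the latter as a tensor product), and to compare $\inv\Sigma M=\varinjlim_{s\in\Sigma} M$ before and after applying $\frob*$, but the resulting bookkeeping with $q$-th powers is essentially the same.
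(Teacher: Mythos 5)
Your proof is correct and is essentially the paper's argument read in the opposite direction: the paper exhibits the map ${*}(m/s)\mapsto \tfrac{{*}(s^{q-1}m)}{s}$ and its inverse $\tfrac{{*}n}{t}\mapsto {*}(n/t^q)$, which is exactly your $\phi$, and checks that they compose to the identity, while you construct $\phi$ alone and argue bijectivity directly. The key manipulations—replacing a witness $u$ by $u^q$ for injectivity and writing $m/s=s^{q-1}m/s^q$ for surjectivity—are the same in both.
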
 
\begin{proof}
Any element of $\frob{*}(\inv \Sigma M)$ is of the form ${*}(\tfrac ms)$, with $m\in M$ and $s\in \Sigma $. To it, we let correspond the element $\tfrac{{*}(s^{p-1}m)}s$ in $\inv \Sigma (\frob{*}M)$. To see that this is well defined, suppose $\tfrac ms=\tfrac{\tilde m}{\tilde s}$ in $\inv \Sigma M$, for some $\tilde m\in M$ and $\tilde s\in \Sigma $. Hence, $\tilde stm=st\tilde m$ in $M$, for some $t\in \Sigma $. Multiplying both sides with $(s\tilde st)^{p-1}$, we get $s^{p-1}\tilde s^pt^pm=s^p\tilde s^{p-1}t^p\tilde m$ in $M$, whence $\tilde st{*}(\tilde s^{p-1}m)=st{*}( s^{p-1}\tilde m)$ in $\frob{*}M$, showing that $\tfrac{{*}(s^{p-1}m)}s=\tfrac{{*}(\tilde s^{p-1}\tilde m)}{\tilde s}$ in $\inv \Sigma (\frob{*}M)$. The converse is defined by sending $\tfrac{{*}n}t$, with $n\in M$ and $t\in \Sigma $ to ${*}(\tfrac n{t^p})$, which again, by a similar argument, is well-defined. Moreover, the compositions   ${*}(\tfrac ms)\mapsto \tfrac{{*}(s^{p-1}m)}s\mapsto {*}(\tfrac{s^{p-1}m}{s^p})={*}(\tfrac ms)$ and $\tfrac{{*}n}t\mapsto {*}(\tfrac n{t^p})\mapsto \tfrac{{*}(t^{p(p-1)}n}{t^p}=\tfrac{t^{p-1}{*}n}{t^p}=\tfrac{{*}n}t$ are the identity, showing that these maps are each others inverse. We leave it to the reader to verify that these maps are $R$-linear, and therefore give the desired isomorphism.
\end{proof}

\begin{theorem}\label{T:loccohFrob}
For each $R$-module $M$ and each $i\geq 0$, we have a canonical isomorphism
$$
\frob{*}\hlc Mi\maxim\iso \hlc{\frob{*}M}i\maxim.
$$
\end{theorem}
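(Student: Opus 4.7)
The plan is to compute $\hlc Mi\maxim$ via the \v{C}ech complex and use the commutation of $\frob*$ with localization supplied by \Lem{L:locFrob}. Fix a generating sequence $\tuple x = (x_1, \ldots, x_d)$ of $\maxim$, and let $C^\bullet(\tuple x; M)$ be the associated \v{C}ech complex, with $C^0 = M$ and
$$C^k = \bigoplus_{i_1 < \cdots < i_k} M_{x_{i_1}\cdots x_{i_k}}\qquad(k \geq 1),$$
equipped with the standard signed differentials induced by further localization. The well-known canonical identification $\hlc Mi\maxim \iso H^i(C^\bullet(\tuple x; M))$, applied both to $M$ and to $\frob*M$, reduces the theorem to exhibiting a canonical isomorphism of complexes $\frob* C^\bullet(\tuple x; M) \iso C^\bullet(\tuple x; \frob* M)$ and then invoking exactness of $\frob*$.

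Termwise, \Lem{L:locFrob} supplies precisely the required $R$-linear isomorphisms $\frob*(M_{x_{i_1}\cdots x_{i_k}}) \iso (\frob*M)_{x_{i_1}\cdots x_{i_k}}$, and $\frob*$ commutes with finite direct sums, so each cohomological degree matches up. The one substantive check is that these termwise isomorphisms commute with the \v{C}ech differentials — equivalently, that the \Lem{L:locFrob} isomorphism is natural with respect to further localization (the universal maps $M_{x_{i_1}\cdots x_{i_k}} \to M_{x_{i_1}\cdots x_{i_{k+1}}}$). This is a direct verification against the explicit inverse formulas in the proof of \Lem{L:locFrob}, and ultimately amounts to the observation that the universal property of localization is preserved by restriction of scalars along the Frobenius.

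Finally, $\frob*$ is exact — it is simply restriction of scalars, leaving underlying abelian groups and $\zet$-linear maps unchanged, with only the $R$-action twisted — so it commutes with the cohomology of any complex. Assembling the pieces gives
$$\frob*\hlc Mi\maxim \iso H^i(\frob* C^\bullet(\tuple x; M)) \iso H^i(C^\bullet(\tuple x; \frob* M)) \iso \hlc{\frob*M}i\maxim,$$
and canonicity follows from the canonicity of both the \v{C}ech identification and of \Lem{L:locFrob}. The only point requiring genuine care is the naturality verification in the middle paragraph; everything else is formal. An alternative route would be to take $\hlc Mi\maxim = \varinjlim_n \ext Rin{R/\maxim^n}M$ together with the facts that $\frob*$ commutes with direct limits and that $\frob*\hom R PM \iso \hom R P{\frob*M}$ for free $P$, but the \v{C}ech route is more direct and exploits \Lem{L:locFrob} most economically.
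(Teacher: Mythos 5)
Your argument is essentially identical to the paper's: both compute $\hlc Mi\maxim$ via the \v{C}ech complex, apply \Lem{L:locFrob} termwise to obtain an isomorphism of complexes $\frob*C^\bullet_M\iso C^\bullet_{\frob*M}$, and then invoke exactness of $\frob*$ to commute it past cohomology. You are a bit more explicit than the paper in flagging the compatibility of the \Lem{L:locFrob} isomorphisms with the \v{C}ech differentials, but the substance is the same.
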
 
\begin{proof}
Let $\rij xd$ be a system of parameters in $R$ and let $C_M^\bullet$ be the corresponding \v Cech complex on $M$, that is to say, for each $i$, the module $C_M^i$ is the direct sum of all localizations $M_z$ where $z$ runs over all products of $i$ distinct elements from the system of parameters, and the differential $C_M^i\to C_M^{i+1}$ is given by the natural localization maps, up to a sign (for details, see \cite[\S3.5]{BH}). In particular, $C_M^\bullet\iso C_R^\bullet\tensor_RM$, and the homology of this complex is the local cohomology $H_\bullet(C_M^\bullet)=\hlc M\bullet\maxim$. By \Lem{L:locFrob}, we have an isomorphism of complexes $\frob{*}(C_M^\bullet)\iso C_{\frob{*}M}^\bullet$. Taking homology and using that $\frob {*}$, being exact, commutes with homology, we get
$$
\frob{*}(\hlc M\bullet\maxim)=\frob{*}(H_\bullet(C_M^\bullet))\iso H_\bullet(\frob{*}C_M^\bullet)\iso H_\bullet(C_{\frob{*}M}^\bullet)=\hlc{\frob{*}M}\bullet\maxim.
$$
\end{proof} 

Immediate form this, we get:
 \begin{corollary}\label{C:frMCM}
If $M$ is an MCM, then so is its \fr\ $\frob*M$.\qed
\end{corollary}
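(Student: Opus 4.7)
The plan is to invoke the local cohomology characterization of depth together with the key exchange formula from Theorem~\ref{T:loccohFrob} just proved. Recall that a finitely generated module $M$ over a $d$-dimensional local ring $(R,\maxim)$ is MCM exactly when $\hlc Mi\maxim = 0$ for every $i < d$, and additionally $\dim M = d$.

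First I would verify that $\frob*M$ has the right dimension. Because $R$ is F-finite, $\frob*M$ is finitely generated, and by \Lem{L:locFrob} its support agrees with that of $M$, so $\dim(\frob*M) = \dim M = d$. Next, applying \Thm{T:loccohFrob}, one has
\[
\hlc{\frob*M}i\maxim \iso \frob*\bigl(\hlc Mi\maxim\bigr)
\]
for every $i$. By hypothesis the right-hand side vanishes for $i < d$, since the \fr\ functor sends the zero module to zero. Consequently $\hlc{\frob*M}i\maxim = 0$ for $i < d$, i.e., the depth of $\frob*M$ is at least $d$. Combined with the dimension bound $\operatorname{depth}(\frob*M) \leq \dim(\frob*M) = d$, this forces depth equal to $d$, which is the MCM property.

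There is essentially no obstacle here: the corollary is a direct and almost immediate consequence of \Thm{T:loccohFrob}, once one recognizes that vanishing of local cohomology is both the defining feature of MCM (below the top degree) and a condition preserved by the exact, covariant functor $\frob *$. The only minor check is that $\frob *M$ has the same dimension as $M$, which is handled by \Lem{L:locFrob}.
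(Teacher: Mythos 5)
Your argument is correct and is exactly the intended one: the paper marks this corollary as "immediate from" Theorem~\ref{T:loccohFrob}, which is precisely the isomorphism $\hlc{\frob*M}i\maxim \iso \frob*(\hlc Mi\maxim)$ you invoke to transfer vanishing of local cohomology below degree $d$. Your dimension check via \Lem{L:locFrob} is sound though actually superfluous, since $\op{depth}(\frob*M)\leq\dim(\frob*M)\leq\dim R=d$ holds automatically.
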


\begin{proposition}\label{P:finlenFrob}
If $M$ has finite length, then $\len{\frob{*}M}=\iota\cdot\len M$, where $\iota:=\iota_q(R)$.
\end{proposition}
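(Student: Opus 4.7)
My plan is a straightforward induction on $\len M$, pivoting on the exactness of the \fr\ functor and reducing everything to the single computation $\len{\frob{*}k}=\iota$.

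First I would choose a composition series $0 = M_0 \subset M_1 \subset \cdots \subset M_n = M$ of $M$ with successive quotients $M_i/M_{i-1}\iso k$, so that $\len M = n$.  Applying $\frob{*}$, which is exact, produces a filtration $0 = \frob{*}M_0 \subset \cdots \subset \frob{*}M_n = \frob{*}M$ whose successive quotients are all isomorphic to $\frob{*}k$.  Hence $\len{\frob{*}M} = n\cdot\len{\frob{*}k}$, and the problem collapses to computing $\len{\frob{*}k}$.

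For this base case, I would observe that for any $r\in\maxim$ and any $*a\in\frob{*}k$ one has $r{*}a = *(r^qa)=0$, since $r^q\in\maxim$ annihilates $k$.  Thus $\maxim$ kills $\frob{*}k$, so $\frob{*}k$ is in a natural way a $k$-vector space and its $R$-length equals its $k$-dimension.  Unwinding the induced $k$-action $\alpha\cdot *a = *(\alpha^qa)$, that dimension is precisely $(\frob{*}k:k) = \iota_q(R) = \iota$ by the very definition of the degree of imperfection.  There is no serious obstacle here: the only points worth underlining are that exactness of $\frob{*}$ is indispensable for the filtration step, and that one must briefly verify the $k$-vector space structure on $\frob{*}k$ induced from the $R$-action coincides with the one implicit in the definition of $\iota_q(R)$, which amounts to unwinding conventions.
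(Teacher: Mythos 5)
Your proof is correct and takes essentially the same approach as the paper: reduce via exactness of $\frob{*}$ and a composition series (the paper phrases it as an induction on length with a short exact sequence at each step, but it is the same argument) to the base case $\len{\frob{*}k}=\iota$. You spell out the base case a bit more carefully than the paper, which simply says it holds by definition of $\iota_q(R)$; the extra unwinding of the $k$-vector space structure on $\frob{*}k$ is harmless and correct.
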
 
\begin{proof}
We want to show by induction on $l:=\ell(M)$ that $\frob {*}M$ has length $l\iota  $. When $l=1$, then $M\iso k$, and the result holds by definition. For $l>1$, choose an exact sequence $\Exactseq NMk$ with $\len N=l-1$. Since $\frob {*}$ is an exact functor, we get an exact sequence $\Exactseq{\frob{*}N}{\frob{*}M}{\frob{*}k}$, showing that $\len{\frob{*}M}=\len{\frob{*}N}+\len{\frob{*}k}=(l-1)\iota+\iota=l\iota $, as we wanted to show. 
%
%
\end{proof} 

\begin{corollary}\label{C:finlenitFrob}
For any module $M$ of finite length $l$ over a local ring $R$ with perfect residue field $k$, we have $\frob{*}^nM\iso k^l$, for all $n\gg0$.
\end{corollary}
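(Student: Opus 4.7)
The plan is to combine two observations: the length of $\frob{*}^n M$ stays constant (because $k$ is perfect), while the annihilator grows fast enough that eventually $\maxim$ acts trivially, making the module a $k$-vector space.

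First I would note that, since $k$ is perfect, $\iota_q(R) = 1$ for every $q = p^n$, so by \Prop{P:finlenFrob} (applied iteratively or directly with $q=p^n$), $\ell(\frob{*}^n M) = l$ for all $n \geq 0$. In particular, each iterate $\frob{*}^n M$ is an Artinian module of length exactly $l$.

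Next, since $M$ has finite length, there exists $t$ with $\maxim^t M = 0$. I would then verify by induction that $\frob{*}^n M$ is canonically identified with $\frob{q*}M$, where $q = p^n$, in the sense that the $R$-action on an element $*^n m$ is given by $r \cdot *^n m = *^n(r^{q} m)$. Choose $n$ large enough that $q = p^n \geq t$. Then for any $r \in \maxim$ and any $m \in M$,
\[
r \cdot *^n m \;=\; *^n(r^{q} m),
\]
and since $r^{q} \in \maxim^{q} \subseteq \maxim^{t}$, we have $r^{q} m = 0$. Hence $\maxim \cdot \frob{*}^n M = 0$, so $\frob{*}^n M$ is annihilated by $\maxim$, i.e., it is a $k$-vector space.

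A $k$-vector space of length $l$ is isomorphic to $k^l$, which completes the proof. The only potential subtlety is the bookkeeping of the iterated scalar action (checking that $n$ iterations of $\frob{p*}$ genuinely yields the $p^n$-Frobenius transform), but this is immediate from the definition of the action $r\cdot *m = *r^p m$. No deep obstacle is expected.
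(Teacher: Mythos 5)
Your proposal is correct and follows essentially the same route as the paper: the paper also observes that since $\maxim^l$ annihilates $M$, an induction (which you make explicit via the identification $\frob{*}^n = \frob{p^n *}$ and the bound $p^n \geq t$) shows $\maxim$ annihilates the iterated \fr, and then invokes \Prop{P:finlenFrob} together with perfectness to conclude the length is still $l$. The only cosmetic difference is that the paper fixes the specific iterate $n=l$, while you allow any $n$ with $p^n$ at least the nilpotency order $t$ of $\maxim$ on $M$.
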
 
\begin{proof}
Since $\maxim^l$ annihilates $M$, an easy induction argument yields that $\maxim$ annihilates $\frob{*}^lM$. Hence $\frob{*}^lM$ is a vector space,  of length $l$ by \Prop{P:finlenFrob}.
\end{proof} 

Recall that  $\mul M0:=\len{\hlc M0\maxim}$, which is therefore the largest length of an Artinian submodule of $M$. We can now give a more general form of \eqref{eq:lenfr}:

\begin{corollary}\label{C:lenfr}
  With $\iota:=\iota_q(R)$, we have, for any finitely generated $R$-module,  
  \begin{equation}\label{eq:lenfrimp}
\mul{\frob*M}0=\iota \mul M0.
\end{equation} 
\end{corollary}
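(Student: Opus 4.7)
The plan is to combine the two main results just established in the section: the identification of local cohomology of $\frob*M$ with the \fr\ of the local cohomology of $M$ (\Thm{T:loccohFrob}), and the multiplicative behaviour of length under $\frob*$ on finite-length modules (\Prop{P:finlenFrob}). The identity to prove is essentially the $i=0$ case of the former, measured by length.

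First I would observe that $\hlc M0\maxim$ is the largest Artinian submodule of $M$, hence has finite length equal to $\mul M0$ by definition. Applying \Thm{T:loccohFrob} with $i=0$ gives a canonical isomorphism
\begin{equation*}
\hlc{\frob*M}0\maxim \iso \frob*\bigl(\hlc M0\maxim\bigr).
\end{equation*}
Taking lengths of both sides, the left-hand side has length $\mul{\frob*M}0$ by definition.

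For the right-hand side, since $\hlc M0\maxim$ has finite length $\mul M0$, \Prop{P:finlenFrob} applies and yields
\begin{equation*}
\len{\frob*\bigl(\hlc M0\maxim\bigr)} = \iota\cdot\len{\hlc M0\maxim} = \iota\cdot\mul M0.
\end{equation*}
Combining the two equalities gives the desired formula \eqref{eq:lenfrimp}.

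There is no real obstacle here: the proof is a routine concatenation of the preceding theorem and proposition, and in particular does not require any additional hypotheses on $M$ or $R$ beyond the standing assumption that $R$ is F-finite (so that $\frob*$ preserves finite generation and $\iota=\iota_q(R)$ is finite). The original formula \eqref{eq:lenfr} announced in the introduction is then recovered as the special case when $k$ is perfect, where $\iota=1$.
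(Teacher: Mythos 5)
Your proof is correct and is essentially the same as the paper's: the paper identifies $\frob*\bigl(\hlc M0\maxim\bigr)$ with the maximal Artinian submodule of $\frob*M$ by a direct argument, whereas you invoke the already-established \Thm{T:loccohFrob} at $i=0$, which is the cleaner way to say the same thing; both then conclude with \Prop{P:finlenFrob}.
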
 
\begin{proof}
Let $H:=\hlc M0\maxim$ be the maximal Artinian submodule of $M$. It is not hard to see that $\frob*H$ is then the maximal Artinian submodule of $\frob*M$. The result follows, since  $\len{\frob*H}=\iota\len H$ by \Prop{P:finlenFrob}. 
\end{proof}

\section{The \fr\ of  a \pcan\ module}\label{s:frcan}
We will interpret $m$-tuples $\tuple x$ as row vectors and so, their image under an $m\times n$-matrix $\mat A$ is given by $\tuple x\mat A$.  For our purposes, it is more convenient to use the logicians' numbering, that is to say, we list the entries $a_{ij}$ of $\mat A$ from $i=\range 0{m-1}$ and $j=\range 0{n-1}$. 
%

Given a morphism $f\colon M\to N$, the induced morphism $\frob*f\colon\frob*M\to \frob*N$ is given by the rule $*m\mapsto *f(m)$. Let $\mulel rM$ be the endomorphism on $M$ given by multiplication  of a scalar $r\in R$. Note that $\frob*(\mulel rM)$ is in general not multiplication with $r$, but, in fact
$$
(\frob*(\mulel rM))^q=\mulel {r}{\frob*M}.
$$
For any module $M$, we obtain a ring \homo\footnote{Note that this is not a morphism of $R$-algebras; to make it into one, we should instead consider the morphism $\frob{*}R\to \ndo{\frob{*}M}\colon {*}r\mapsto \frob{*}(\mulel rM)$, but this will be of lesser use to us.}
$$
\nabla_M\colon R\to \ndo{\frob{*}M}\colon r\mapsto \frob{*}(\mulel rM).
$$

 Given an $m\times n$-matrix  $\mat A$ and a module $M$, we    get a morphism $M^m\to M^n\colon \tuple m\mapsto \tuple m\mat A$, which we will denote similarly by $\mulel{\mat A}M$ (and when there is no possibility for confusion, we just write the matrix for the morphism it defines). However, for the converse,  a morphism $M^m\to M^n$ is given by a matrix with entries over  the  (possibly non-commutative) endomorphism ring $\ndo M$.  The \fr\ $\frob{*}(\mulel{\mat A}M)$ is given by  the rule $\tuple m\mapsto {*}\tuple m\mat A$, and its corresponding   matrix has coefficients in  $\ndo {\frob{*}M}$, namely:

\begin{lemma}\label{L:frobmat}
Given an $m\times n$-matrix $\mat A$ over $R$, we will write $\mat A^{\nabla_M}$ for the  matrix   over $\ndo{\frob{*}M}$ obtained from $\mat A$ by applying $\nabla_M\colon R\to \ndo{\frob{*}M}$ to each of its entries. Then 
$$
\frob{*}(\mulel{\mat A}M)=\mulel{(\mat A^{\nabla_M})}{\frob{*}M}.\qed
$$
\end{lemma}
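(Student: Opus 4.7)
The plan is to prove the lemma by direct unpacking: evaluate both sides on a generic element $*\tuple m = (*m_0,\ldots,*m_{m-1}) \in (\frob*M)^m$ and check that they land on the same vector in $(\frob*M)^n$. The statement really just records that two bookkeeping devices for expressing $\frob*$ of a matrix-multiplication morphism agree, so no deep machinery is needed beyond the definitions of $\frob*$ on morphisms and of $\nabla_M$.

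First I would compute the left-hand side. By the definition of $\frob*$ applied to a morphism, $\frob*(\mulel{\mat A}M)$ sends $*\tuple m$ to $*(\tuple m \mat A)$; since the $j$-th coordinate of $\tuple m \mat A \in M^n$ is $\sum_i m_i a_{ij}$, the $j$-th coordinate of the image is $*\bigl(\sum_i m_i a_{ij}\bigr)$.

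Next I would compute the right-hand side. Here $\mat A^{\nabla_M}$ has $(i,j)$-entry the endomorphism $\nabla_M(a_{ij}) = \frob*(\mulel{a_{ij}}M)$ of $\frob*M$, which acts by $*m\mapsto *(a_{ij}m)$. The morphism $\mulel{\mat A^{\nabla_M}}{\frob*M}$ acts on $*\tuple m$ by matrix multiplication, interpreted as application of endomorphisms followed by summation, so its $j$-th coordinate is
$$
\sum_i \nabla_M(a_{ij})(*m_i) \;=\; \sum_i *(a_{ij}m_i) \;=\; *\Bigl(\sum_i a_{ij}m_i\Bigr),
$$
where the last equality uses that $\frob*$ is additive. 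This coincides with the $j$-th coordinate of the left-hand side, so the two morphisms agree on $*\tuple m$.

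The only mildly subtle point — and the closest thing to an obstacle — is to ensure that evaluating a matrix with non-commutative endomorphism entries on a row vector is done in the convention consistent with right-multiplication by $\mat A$, which is forced by the row-vector convention fixed at the start of the section. Once that bookkeeping is in place, no further ingredient is required; in particular, the ring \homo\ property of $\nabla_M$ plays no role in this single-element verification, and the naturality/exactness properties of $\frob*$ established earlier in the paper are more than enough to finish.
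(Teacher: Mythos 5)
Your verification is correct and is exactly the routine unpacking the paper leaves implicit (the lemma is stated with a terminal \qed and no written proof). Both sides send $*\tuple m$ to the vector with $j$-th coordinate $*\bigl(\sum_i a_{ij}m_i\bigr)$, and you have correctly flagged the only potential pitfall — the convention for applying an endomorphism-valued matrix to a row vector — and resolved it consistently with the paper's setup.
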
 

\subsection*{Quasi-symmetric matrices}
We need some terminology from linear algebra. The $n\times n$-identity matrix will be denoted by $\mat I_n$, or just $\mat I$, if its size is clear from the context. We also need the $n\times n$-\emph{exchange matrix} $\mat J_n$ (or just $\mat J$), which is the matrix with ones on the counterdiagonal (=SE/NW diagonal) and zeros elsewhere. Note that $\mat J^2=\mat I$, and in particular, $\mat J$ is its own inverse. Fix $n$ and let $\mat A=(a_{ij})$ be a square $n\times n$-matrix. 
%
%
%
%
Recall that $\mat A$ is called \emph{symmetric} if it is equal to its own transpose $\trans A$; in matrix entries, this means $a_{ij}=a_{ji}$. We call $\mat A$   \emph{\persym}, if it is symmetric around its counterdiagonal. This is equivalent with $\mat A\mat J=\mat J\trans A$. 
In terms of its entries, this means,  
\begin{equation}\label{eq:qsymm}
a_{ij}=a_{n-1-j,n-1-i}.
\end{equation} 

\subsection*{The \fr\ over a regular local ring}
Let $S$ be a $d$-dimensional complete, regular local ring of \ch\ $p$ with perfect residue field $k$. By Kunz's theorem, $\frob*S$ is a free $S$-module of rank $n:=q^d$. More concretely, $S\iso \pow k{\tuple x}$, with $\tuple x=(x_0,\dots,x_{d-1})$. For $a=\range 0{q^d-1}$, let 
  $\qdig ak$ be its $q$-adic digits, that is to say, $a=\sum \qdig akq^k$ is the $q$-adic expansion of $a$, with $0\leq \qdig ak<q$, and put  
  $$
\tuple m_a:=x_0^{\qdig a0}x_1^{\qdig a1}\cdots x_{d-1}^{\qdig a{d-1}}
$$
  The $*\tuple m_a$, for $a=\range 0{q^d-1}$, then form a basis of $\frob* S$ over $S$, called the standard basis and we will consider all our matrices with respect to this basis (in the given order). 
%
Given $s\in S$, we want to describe the endomorphism $\nabla_S(s)=\frob*(\mulel sS)$ on $\frob*S$. Let us denote its matrix with respect to the standard basis by $\mat D_S(s)$ or just $\mat D(s)$.

\begin{proposition}\label{P:multskewsymm}
Each matrix $\mat D(s)$ is \persym{}.
\end{proposition}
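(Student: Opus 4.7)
My plan is to make the entries of $\mat D(s)$ completely explicit via the $q$-adic expansion of elements of $S=\pow k{\tuple x}$, and then read off the \persym{} symmetry from the involution $a\mapsto n-1-a$ on digit vectors. First I unpack the definition: since $\nabla_S(s)=\frob*(\mulel sS)$ sends ${*}m$ to ${*}(sm)$ and, by the row-vector convention, $\nabla_S(s)({*}\tuple m_a)=\sum_b d_{ab}\cdot {*}\tuple m_b$ with $d_{ab}:=(\mat D(s))_{a,b}$, applying the scalar rule $r\cdot{*}n={*}(r^qn)$ reduces this to the identity
$$
s\,\tuple m_a \;=\; \sum_{b=0}^{n-1}\, d_{ab}^{q}\,\tuple m_b \qquad \text{in } S.
$$
Thus the $a$-th row of $\mat D(s)$ consists of the ``$q$-adic digits'' of $s\,\tuple m_a$ read off against the free basis $\{{*}\tuple m_b\}$ of $\frob*S$.

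Next I write this out in power-series coordinates. Expanding $s=\sum_c s_c\tuple x^c$ with $s_c\in k$, each monomial $s_c\prod_i x_i^{c_i+\qdig ai}$ appearing in $s\,\tuple m_a$ decomposes uniquely as $s_c\cdot\bigl(\prod_i x_i^{e_i}\bigr)^q\cdot \tuple m_b$, where componentwise $\qdig bi=(c_i+\qdig ai)\bmod q$ and $e_i=(c_i+\qdig ai-\qdig bi)/q$. Grouping by $b$ and extracting the unique $q$-th root---available because $k$ is perfect and Frobenius is injective on the domain $S$---I arrive at
$$
d_{ab} \;=\; \sum_{\substack{c\\ c_i\equiv \qdig bi-\qdig ai\pmod q\text{ for all }i}} s_c^{1/q}\,\prod_{i=0}^{d-1} x_i^{(c_i+\qdig ai-\qdig bi)/q}.
$$

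Finally I use the elementary fact that $n-1=\sum_i(q-1)q^i$, so that $\qdig{n-1-a}i=q-1-\qdig ai$ for every digit. Substituting $(a,b)\rightsquigarrow(n-1-b,n-1-a)$ in the formula above replaces $\qdig ai$ by $q-1-\qdig bi$ and $\qdig bi$ by $q-1-\qdig ai$, and so leaves the difference $\qdig ai-\qdig bi$---and therefore both the congruence constraint indexing the sum and the exponent $(c_i+\qdig ai-\qdig bi)/q$---invariant. Hence $d_{ab}=d_{n-1-b,n-1-a}$, which is exactly the \persym{} condition \eqref{eq:qsymm}.

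\textbf{Main obstacle.} Nothing here is deep: the heart of the argument is the tautology that simultaneously complementing two digits in $\{0,\dots,q-1\}$ while swapping them leaves their difference unchanged. The real care lies in the bookkeeping for the $q$-adic expansion---making sure it is $d_{ab}$, not $d_{ab}^q$, that appears as a matrix entry, and that the passage from $d_{ab}^q$ to $d_{ab}$ is legitimate, which is precisely where perfectness of $k$ enters.
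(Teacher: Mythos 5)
Your proof is correct and rests on the same core observation as the paper's, namely that the involution $a\mapsto q^d-1-a$ complements every $q$-adic digit, $\qdig{q^d-1-a}k=q-1-\qdig ak$, and hence leaves the digit differences $\qdig ak-\qdig bk$ governing both the support and the exponents of $\mat D(s)$ unchanged when combined with the swap $(a,b)\mapsto(q^d-1-b,q^d-1-a)$. The organizational difference is that the paper first reduces by linearity (using perfectness to make the coefficients $s_c\in k$ into $q$-th powers, so that $\mat D(s_c)$ is scalar) to the case of a single monomial $s=\tuple x^\gamma$, for which $\mat D(s)$ is a generalized permutation matrix, and then checks the two pieces of data (the permutation $a\mapsto s(a)$ and the monomial entry $\prod_kx_k^{\quot{c_k+\qdig ak}}$) separately; you instead write down a single closed formula for $d_{ab}$ valid for an arbitrary power series $s$, which packages both pieces at once, and then read off the invariance directly. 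This is a legitimate and arguably slicker packaging of the same computation; both routes use perfectness of $k$ in the same essential place (extracting $q$-th roots of coefficients, resp.\ scalar-valuedness of $\mat D(s_c)$), and both are careful about the $d_{ab}$-vs-$d_{ab}^q$ bookkeeping coming from the twisted scalar action on $\frob*S$.
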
 
\begin{proof}
Any linear combination of \persym{} matrices is again \persym{}. Since $S$ is generated by its monomials, it suffices therefore to show the claim for $s=\tuple x^\gamma$ a monomial, with $\gamma=(c_0,\dots,c_{d-1})\in\nat^d$. 
Let $s_{ab}$, for $a,b<q^d$, be the entries of $\mat D(s)$. For an arbitrary integer $c$, let $\quot c$ and $\rem c$ be its respective quotient and remainder after division by $q$, so that $c=q\quot c+\rem c$. Define a permutation, denoted again $s$, on the indices by the rule
$$
s(a):= \sum_{k=0}^{d-1}\rem{c_k+\qdig ak}q^k
$$
Hence,  
$$
\begin{aligned}
\frob*(\mulel sS)(\tuple m_a)&=*\tuple x^\gamma\tuple m_a= \prod_{k=0}^{d-1}x_k^{\quot{c_k+\qdig ak}}{*}\prod_{k=0}^{d-1}x_k^{\rem{c_k+\qdig ak}}\\
&= \prod_{k=0}^{d-1}x_k^{\quot{c_k+\qdig ak}}{*}\tuple m_{s(a)}
\end{aligned}
$$
Hence $s_{ab}$ is zero, unless $b=s(a)$, in which case it is equal to 
\begin{equation}\label{eq:qcak}
s_{ab}=\prod_{k=0}^{d-1}x_k^{\quot{c_k+\qdig ak}}.
\end{equation} 
 We need to verify identity \eqref{eq:qsymm}. From
$$
q^d-1-a=\sum_{k=0}^{d-1}(q-1-\qdig ak)q^k
$$
we read off its $q$-adic digits as $q-1-\qdig ak$. Hence, applied to $b$, we get 
$$
s(q^d-1-b)=\sum_{k=0}^{d-1}\rem{c_k+q-1-\qdig bk}q^k
$$
and this equal to $q^d-1-a$  \iff\ $\rem{c_k+q-1-\qdig bk}=q-1-\qdig ak$. The latter equality is the same as the equivalence $c_k-\qdig bk\equiv -\qdig ak\mod q$ whence $c_k+\qdig ak\equiv  \qdig bk\mod q$, which in turn is equivalent with $s(a)=b$. Therefore, if $a\neq s(b)$, then $s_{q^d-1-b,q^d-1-a}=0$, and so we only need to calculate it in the case that   $a=s(b)$. In that case, 
\begin{equation}\label{eq:qcbk}
s_{q^d-1-b,q^d-1-a}=\prod_{k=0}^{d-1}x_k^{\quot{c_k+q-1-\qdig bk}}.
\end{equation} 
Since $c_k+\qdig ak\equiv  \qdig bk\mod q$, we can write 
\begin{equation}\label{eq:cabk}
c_k+\qdig ak=qv_k+\qdig bk,
\end{equation} 
 for some $v_k$. Since $\qdig bk<q$, we see that $v_k= \quot{c_k+\qdig ak}$. On the other hand, using \eqref{eq:cabk}, we see that $c_k+q-1-\qdig bk=qv_k+q-1-\qdig ak$, and since $q-1-\qdig ak<q$, this shows that $v_k=\quot{c_k+q-1-\qdig bk}$, so that \eqref{eq:qcak} and \eqref{eq:qcbk} are the same, as we wanted to show. 
%
%
%
\end{proof} 

Let $E$ be the injective hull of $k$. 

\begin{proposition}\label{P:GorinjFrob}
We have a canonical isomorphism $\frob*E\iso E\tensor\frob*S$.  
\end{proposition}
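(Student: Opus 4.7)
My plan is to reduce the statement to Theorem~\ref{T:loccohFrob}: first identify $E$ with the top local cohomology $\hlc Sd\maxim$, then pass the \fr{} through local cohomology using that theorem, and finally identify the resulting cohomology of $\frob*S$ with the tensor product $E\tensor_S\frob*S$ by using that top local cohomology is right exact.

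Concretely, since $S$ is a complete regular (in particular Gorenstein) local ring of dimension $d$, local duality provides a canonical isomorphism $E\iso\hlc Sd\maxim$. Applying Theorem~\ref{T:loccohFrob} with $M=S$ and $i=d$ supplies a canonical $S$-linear isomorphism
$$
\frob*E\iso\frob*(\hlc Sd\maxim)\iso\hlc{\frob*S}d\maxim.
$$
It remains to identify this last module with $E\tensor_S\frob*S$. For this, note that because $\dim S=d$, the functor $\hlc{-}d\maxim$ is right exact (it is the top, hence rightmost nontrivial, derived functor of $\Gamma_\maxim$), and it commutes with arbitrary direct sums since it can be computed from the \v Cech complex used in the proof of Theorem~\ref{T:loccohFrob}. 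Hence both $\hlc{-}d\maxim$ and $\hlc Sd\maxim\tensor_S -$ are right exact $S$-linear functors which tautologically agree on $S$; a standard free-presentation argument then upgrades this to a natural isomorphism $\hlc Md\maxim\iso\hlc Sd\maxim\tensor_S M$ for every $S$-module $M$. Specialising to $M=\frob*S$ (which by Kunz's theorem, recalled just above, is even $S$-free of rank $q^d$) produces the canonical isomorphism
$$
\hlc{\frob*S}d\maxim\iso\hlc Sd\maxim\tensor_S\frob*S\iso E\tensor_S\frob*S,
$$
and chaining with the previous display gives the desired $\frob*E\iso E\tensor\frob*S$.

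The one point requiring care — and the only thing I would call a real obstacle — is canonicality. The isomorphism supplied by Theorem~\ref{T:loccohFrob} is canonical by construction, and the isomorphism in the last display comes from the natural comparison morphism $\hlc Sd\maxim\tensor_SM\to\hlc Md\maxim$ evaluated at $M=\frob*S$, so in particular it does not depend on a choice of $S$-basis of $\frob*S$ (whereas the freeness of $\frob*S$ over $S$ of course does). Putting these two canonical isomorphisms together with the local-duality identification $E\iso\hlc Sd\maxim$ yields a canonical isomorphism $\frob*E\iso E\tensor\frob*S$, as claimed.
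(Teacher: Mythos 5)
Your argument is correct and runs along essentially the same lines as the paper's: identify $E$ with $\hlc Sd\maxim$ via Gorenstein duality, apply Theorem~\ref{T:loccohFrob} to commute $\frob*$ past local cohomology, and then use right exactness of $\hlc{-}d\maxim$ (via the \v Cech complex) to pull out the tensor with $\frob*S$. The only substantive thing you omit is the explicit \v Cech-level description of the isomorphism, $\class{\tfrac a{x^n}}\tensor *r\mapsto *\class{\tfrac{ra^p}{x^{np}}}$, which the paper records as equation~\eqref{eq:injFrob} and then relies on in the proof of Lemma~\ref{L:matmulinjhull}.
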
 
\begin{proof}
Since $S$ is in particular Gorenstein, whence equal to its own   canonical module,    $\hlc Sd\maxim\iso E$ by Grothendieck duality.  Using \Thm{T:loccohFrob}, we get  canonical isomorphisms
$$
\frob*E=\frob*\hlc Sd\maxim=\hlc{\frob*S}d\maxim\iso \hlc Sd\maxim\tensor\frob*S\iso E\tensor\frob*S.
$$
In fact, representing $E=\hlc Sd\maxim$ as the cokernel of the Cech complex $C_S^{d-1}\to C^d_S=S_x$, where $x=x_1\cdots x_d$ for a fixed system of parameters $\rij xd$, and writing $\class{\tfrac a{x^n}}$ for the image of $a/x^n$ in $\hlc Sd\maxim$, we can trace the above isomorphism explicitly, and show that it is given by
\begin{equation}\label{eq:injFrob}
E\tensor \frob*S\to \frob*E\colon \class{\tfrac a{x^n}}\tensor *r\mapsto *\class{\tfrac{ra^p}{x^{np}}}
\end{equation} 
To define the converse, observe that any element in $E$ is of the form $\class{\tfrac a{x^{pn}}}$, by multiplying numerator and denominator with a suitable power of $x$. The corresponding element $*\class{\tfrac a{x^{pn}}}$  is then sent under the inverse isomorphism to $\class{\tfrac 1{x^n}}\tensor *a$.
\end{proof} 
\begin{remark}\label{R:GorinjFrob}
Note that we only used the fact that $E\iso \hlc Sd\maxim$,  and so the proof works in fact for any  quasi-Gorenstein ring $S$.
\end{remark} 

Since $\frob*S\iso S^{q^d}$, we get in fact $\frob*E\iso E^{q^d}$. In particular, since $\ndo E=S$, the \fr\ of multiplication with an element $s\in S$ on $E$, that is to say, $\frob*(\mulel sE)$, is given by a $q^d\times q^d$-matrix with coefficients in $S$, which we will denote by $\mat D_E(s)$.

\begin{lemma}\label{L:matmulinjhull}
For any element $s\in S$, we have an equality of matrices 
$\mat D_E(s)=\mat D_S(s)$.
\end{lemma}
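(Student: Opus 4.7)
The plan is to transport $\frob*(\mulel sE)$ back across the explicit isomorphism $\psi\colon E\tensor\frob*S\iso\frob*E$ from \Prop{P:GorinjFrob} and show that the resulting endomorphism is $1_E\tensor\nabla_S(s)$; from this, the matrix identity drops out essentially for free.

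First I would set $T:=\psi^{-1}\after\frob*(\mulel sE)\after\psi$ and evaluate it on the natural generators $\class{\tfrac 1{x^n}}\tensor *\tuple m_a$ of $E\tensor\frob*S$. Tracing through \eqref{eq:injFrob}, $\psi$ sends $\class{\tfrac 1{x^n}}\tensor *\tuple m_a$ to $*\class{\tfrac{\tuple m_a}{x^{np}}}$; the endomorphism $\frob*(\mulel sE)$, by definition of the \fr, acts as $*e\mapsto *(se)$, producing $*\class{\tfrac{s\tuple m_a}{x^{np}}}$; finally, using the explicit inverse given in the proof of \Prop{P:GorinjFrob}, $\psi^{-1}$ returns $\class{\tfrac 1{x^n}}\tensor *(s\tuple m_a)=\class{\tfrac 1{x^n}}\tensor\nabla_S(s)(*\tuple m_a)$. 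Hence $T=1_E\tensor\nabla_S(s)$.

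Next, writing $\nabla_S(s)(*\tuple m_a)=\sum_b s_{ab}\cdot *\tuple m_b$ so that by definition $(s_{ab})=\mat D_S(s)$, the previous computation gives $T(\class{\tfrac 1{x^n}}\tensor*\tuple m_a)=\sum_b \class{\tfrac 1{x^n}}\tensor (s_{ab}\cdot *\tuple m_b)$, and $S$-bilinearity of $\tensor_S$ lets me move $s_{ab}$ across the tensor sign to obtain $\sum_b (s_{ab}\class{\tfrac 1{x^n}})\tensor *\tuple m_b$. Under the decomposition $E\tensor\frob*S=\bigoplus_a (E\tensor *\tuple m_a)$, with each summand canonically identified with $E$, the $(a,b)$-entry of $T$ is thus the ``multiplication by $s_{ab}$'' endomorphism of $E$; using the Matlis identification $\ndo E\iso S$ (valid since $S$ is complete Gorenstein), this entry is simply $s_{ab}\in S$, proving $\mat D_E(s)=(s_{ab})=\mat D_S(s)$.

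I do not expect any real obstacle here, as the argument is essentially a bookkeeping unwinding of the explicit formula~\eqref{eq:injFrob}. The only subtlety that requires mild attention is that the scalar action on $\frob*S$ is twisted by Frobenius, so one has to appeal to $S$-bilinearity of $\tensor_S$---rather than to any ``naive'' motion of scalars---when passing the coefficients $s_{ab}$ across the tensor sign.
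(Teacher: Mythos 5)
Your proposal is correct and takes essentially the same route as the paper: conjugate $\frob*(\mulel sE)$ across the explicit isomorphism of \Prop{P:GorinjFrob} and read off the matrix from \eqref{eq:injFrob}. The only cosmetic difference is that the paper evaluates on a general generator $\class{\tfrac v{x^n}}\tensor *\tuple m_a$ rather than specializing to $v=1$, and does not name the transported map $1_E\tensor\nabla_S(s)$, but the computation is identical.
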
 
\begin{proof}
Let $s_{ab}$ be the entries of $\mat D_S(s)$. 
  The $(a,b)$-th entry of $\mat D_E(s)$ is the endomorphism given by the composition
$$
E\map {i_a} E\tensor \frob*S\iso \frob*E \map{\frob*(\mulel{s}{E})}\frob*E\iso E\tensor\frob*S\map{\pi_b} E
$$
where the isomorphisms are given by \eqref{eq:injFrob}, and $i_a$  (respectively, $\pi_b$) is the base change  of the $a$-th embedding (respectively $b$-th projection map) of $S$ into $\frob*S$ (respectively, of $\frob*S$ onto $S$). Since $\ndo E=S$, this endomorphism is then given by multiplication with an element, and we need to show that this is $s_{ab}$. To this end, take $z\in E$. In the notation of the previous proof, it is of the form $z=\class{\tfrac v{x^n}}$, with $v\in S$, $n\in\nat$, and $x:=x_0\cdots x_{d-1}$. Its image under the above composition is 
$$
z\overset{i_a}\mapsto z\tensor *\tuple m_a\overset{\eqref{eq:injFrob}}\mapsto *\class{\tfrac{v^q\tuple m_a}{x^{nq}}}\overset{\frob*(\mulel{s}{E})}\mapsto *\class{\tfrac{v^qs\tuple m_a}{x^{nq}}}\overset{\eqref{eq:injFrob}}\mapsto \class{\tfrac v{x^n}}\tensor *s\tuple m_a\overset{\pi_b}\mapsto  s_{ab}z 
$$
since $*s\tuple m_a=\sum_b s_{ab}{*}\tuple m_b$, proving the claim.
\end{proof}

To prove \eqref{eq:frpcan}, we temporarily  introduce the following functor on the category of finitely generated modules
\begin{equation}\label{eq:frobmatlis}
\frobmatlis M:=\matlis{(\frob{*}(\matlis M))}
\end{equation} 

 \begin{theorem}\label{T:matlisFrobfuncreg}
Over a complete  local ring $R$,  we have $\frob{*}(M)\iso \frobmatlis M$, for any $R$-module $M$.
\end{theorem}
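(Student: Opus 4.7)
The plan is to first prove the theorem when $R=S$ is a complete regular local ring with perfect residue field, and then reduce the general case to this one via Cohen's structure theorem.

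In the regular case, \Prop{P:GorinjFrob} gives an identification $\frob*E \iso E \tensor \frob*S$, which upon fixing the standard basis $\{*\tuple m_a\}$ of $\frob*S$ identifies $\frob*E$ with $E^n$, where $n:=q^d$. Matlis dualizing then yields
\[
\frobmatlis S = \matlis{\frob*\matlis S} = \matlis{\frob*E} \iso \matlis{E^n} \iso S^n \iso \frob*S
\]
as $S$-modules. Extending componentwise gives isomorphisms on every free module, and passing to presentations --- using that both $\frob*$ and $\frobmatlis$ are exact covariant functors commuting with finite direct sums --- produces isomorphisms on every finitely generated module $M$, provided the isomorphism on free modules is compatible with $S$-linear maps.

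The heart of the argument, and the step I expect to be the main obstacle, is this naturality check. Multiplication by $s \in S$ acts on $\frob*S$ (in the chosen basis) by the matrix $\mat D_S(s)$, whereas by contravariance the induced action on $\frobmatlis S = \matlis{\frob*E}$ is given by the transpose of $\mat D_E(s)$. By \Lem{L:matmulinjhull} we have $\mat D_E(s) = \mat D_S(s)$, so the two actions differ only by a transpose. Fortunately, \Prop{P:multskewsymm} tells us that each $\mat D_S(s)$ is persymmetric, so that conjugation by the exchange matrix $\mat J$ converts $\mat D_S(s)$ into its transpose. Composing the componentwise identification $\frobmatlis S \iso S^n$ with the basis permutation induced by $\mat J$ therefore produces an isomorphism $\frob*S \iso \frobmatlis S$ that intertwines the two scalar actions; this compatibility then propagates through any presentation, delivering a natural transformation on all finitely generated $S$-modules.

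To handle a general complete local $R$, I would invoke Cohen's structure theorem to write $R = S/I$ with $S$ a complete regular local ring having the same (perfect) residue field. An $R$-module $M$ is then an $S$-module killed by $I$, and because the Frobenius on $R$ is induced from that on $S$, the module $\frob*M$ computed over $R$ coincides with the one computed over $S$ as an abelian group, with matching scalar actions (the $S$-action on $\frob*_S M$ still kills $I$, since $I$ is closed under $q$-th powers). By \Prop{P:finMatlisdual}\eqref{i:finMat}, the same coincidence holds for $\matlis M$. Therefore $\frobmatlis M$ is the same whether computed over $R$ or over $S$, and the isomorphism established in the regular case descends to the desired isomorphism over $R$.
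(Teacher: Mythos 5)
Your proof is correct and follows essentially the same route as the paper's: establish the regular case using \Prop{P:GorinjFrob} and Kunz's theorem, resolve the transpose mismatch between $\frob{*}$ and $\frobmatlis{\cdot}$ on morphisms of free modules by combining the persymmetry of $\mat D_S(s)$ (\Prop{P:multskewsymm}) and the equality $\mat D_E(s)=\mat D_S(s)$ (\Lem{L:matmulinjhull}) with conjugation by the exchange matrix $\mat J$, and then reduce the general case via Cohen's structure theorem and \Prop{P:finMatlisdual}. The only quibble is a slight imprecision in saying ``multiplication by $s$ acts on $\frob{*}S$ by $\mat D_S(s)$'' --- scalar multiplication by $s$ on $\frob{*}S$ is $s\mat I$, while $\mat D_S(s)$ is the matrix of $\frob{*}(\mulel sS)$, i.e.\ the image of the endomorphism $\mulel sS$ under the functor $\frob{*}$; it is the latter comparison (matching how $\frob{*}$ and $\frobmatlis{\cdot}$ act on a presentation matrix) that makes the cokernel argument go through, exactly as in the paper.
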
 
\begin{proof}
Assume first that $R$ is regular of dimension $d$, so that $\frob{*}R\iso R^{q^d}$ by Kunz's Theorem. Using   \Prop{P:GorinjFrob}, we get
$$
\frobmatlis R=\matlis{(\frob*E)}\iso \matlis{(E^{q^d})}=R^{q^d}\iso\frob*R.
$$
%
 For $M$ an arbitrary finitely generated $R$-module, we can represent it as a the cokernel of a (square) matrix $\mat A$, that is to say, by an exact sequence $ R^n\map {\mat A} R^n\to M\to 0$. By \Lem{L:frobmat}, its \fr\  is 
\begin{equation}\label{eq:frobf}
  \frob{*}R^n\map{\mat A^{\nabla_R}} \frob{*}R^n\to \frob*M\to 0
\end{equation} 
Recall that $\mat A^{\nabla_R}$  is an $n\times n$-matrix of $(q^d\times q^d)$-matrices: if the $(i,j)$-th entry of $\mat A$ is $a_{ij}$, then the $(i,j)$-th entry of $(\trans  A)^{\nabla_R}$ is the   matrix $\mat D_R(a_{ji})$. On the other hand, by Matlis duality $0\to \matlis M\to E^n\to E^n$ is given by the transpose $\trans A$. Taking again the \fr\ and using \Lem{L:frobmat}, we get an exact sequence
 $$
0 \to \frob*(\matlis M)\to (\frob*E)^n\map{(\trans A)^{\nabla_E}} (\frob*E)^n
 $$
 and taking one more time Matlis duals, an exact sequence
\begin{equation}\label{eq:Tf}
 \frobmatlis R^n\map{\trans{((\trans A)^{\nabla_E})}}\frobmatlis R^n\to \frobmatlis M\to 0
\end{equation} 
%
%
 Let $\mat P$ be the matrix $\op{diag}(\mat J,\dots,\mat J)$, that is to say, the diagonal $n\times n$-matrix with diagonal  elements the  $q^d$-th  exchange matrix $\mat J$. Since each $\mat D_E(a_{ji})$ is \persym{} by \Prop{P:multskewsymm} and equal to $\mat D_R(a_{ji})$ by \Lem{L:matmulinjhull}, one easily verifies the following matrix relation
$$
\mat P\cdot(\trans A)^{\nabla_E}\cdot\inv{\mat P}=\trans{\left(\mat A^{\nabla_R}\right)}
$$ 
Hence, as $\frobmatlis R\iso\frob*R$ and  the matrices in \eqref{eq:frobf} and \eqref{eq:Tf} are conjugate, their cokernels are isomorphic, as we needed to show.
%

For $R$ arbitrary, by Cohen's structure theorem, we can find a regular local ring $S$ and a surjection $S\to R$. The functor $\frob{*}(\cdot)$ on the category of $R$-modules remains the same, if we consider them instead as $S$-modules. By \Prop{P:finMatlisdual}, the same is true for Matlis duality, whence also for the composite functor $\frobmatlis \cdot$. Hence, as $S$-modules, we have an isomorphism $\frob{*}M\iso \frobmatlis M$, which therefore is also an isomorphism over $R$.
\end{proof} 

\subsection{Proof of \eqref{eq:frpcan}}\label{s:frpcan}
For all $i$, we have
$$
\begin{aligned}
 \can{\frob*M}i &\overset{\ref{D:pcan}}\iso \matlis{(\hlc{\frob*M}{d-i}\maxim)} 
\overset{\ref{T:loccohFrob}}\iso \matlis{(\frob*\hlc M{d-i}\maxim)}\\
&\overset{\ref{D:pcan}}\iso \matlis{(\frob*(\matlis{\can Mi}))}
\overset{\eqref{eq:frobmatlis}}=\frobmatlis{\can Mi}
\overset{\ref{T:matlisFrobfuncreg}}\iso \frob*\can Mi.\qed
\end{aligned}
$$

We also derive the following dual version of Kunz's theorem:

\begin{theorem}\label{T:regfrobinj} 
A complete local ring $R$ with perfect residue field $k$ is regular \iff\ $\frob*E$ is injective, where $E$ is the injective hull of   $k$.
\end{theorem}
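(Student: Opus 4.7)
The forward implication is essentially immediate from the tools already in place. If $R$ is regular, then $R$ is Gorenstein, whence quasi-Gorenstein, so the remark after \Prop{P:GorinjFrob} produces a canonical isomorphism $\frob*E\cong E\tensor_R\frob*R$. Kunz's theorem identifies $\frob*R$ with $R^{q^d}$, so $\frob*E\cong E^{q^d}$ is a finite direct sum of copies of $E$, hence injective. I will dispose of this direction first in a single sentence.

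For the converse, the plan is to decompose $\frob*E$ using Matlis structure theory and then force the decomposition to be finite via \Thm{T:matlisFrobfuncreg}. The first step is to check that $\frob*E$ is still supported only at $\maxim$: any element $*x\in\frob*E$ with $\maxim^Nx=0$ satisfies
\[
\maxim^N\cdot(*x)=*(\maxim^{Nq}x)\sub*(\maxim^Nx)=0.
\]
Combining this with the hypothesis that $\frob*E$ is injective, the structure theorem for injective modules over a Noetherian ring yields a decomposition $\frob*E\iso\bigoplus_{i\in I}E$ for some (a priori possibly infinite) index set $I$.

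Next, I apply \Thm{T:matlisFrobfuncreg} with $M=R$: since $\matlis R=E$ (as $R$ is complete), it gives $\frob*R\iso\matlis{\frob*E}$. Dualizing the direct-sum decomposition above converts the sum into a product, $\matlis{\frob*E}\iso\prod_{i\in I}\matlis E=R^I$. Because $R$ is F-finite, $\frob*R$ is finitely generated, so $R^I$ must be finitely generated; the surjection $R^I\onto k^I$ (mod $\maxim$) combined with Nakayama forces $I$ to be finite. Hence $\frob*R\iso R^n$ is free, and Kunz's theorem concludes that $R$ is regular.

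The main potential obstacle is the cleanness of the appeal to Matlis structure theory under the \emph{twisted} scalar action: one must be careful that the support, socle and indecomposable-summand analysis are done with respect to the $R$-action inherited by $\frob*E$, not the original one on $E$. Once the support-at-$\maxim$ check is in place, however, the argument is essentially formal, and the finiteness of the index set is the only nontrivial bookkeeping step.
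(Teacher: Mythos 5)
Your proof is correct and follows essentially the same route as the paper: both directions ultimately rest on \Prop{P:GorinjFrob}, \Thm{T:matlisFrobfuncreg}, and Kunz's theorem. The one place you add genuine value is the converse: the paper simply asserts that an injective $\frob*E$ is ``necessarily of the form $E^n$'', taking the finiteness for granted, whereas you supply the two missing checks. First, you verify that $\frob*E$ is still $\maxim$-torsion under the twisted $R$-action, which is what restricts the Matlis structure theorem to copies of $E$ alone. Second, you observe that an infinite direct sum cannot occur because Matlis duality converts it into a product $R^I$, which by \Thm{T:matlisFrobfuncreg} must agree with $\frob*R$, a finitely generated module (F-finiteness holds automatically here, since $R$ is complete with perfect residue field), so Nakayama pins $I$ down to a finite set. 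This Nakayama route is a clean alternative to showing directly that $\frob*E$ is Artinian, and it makes explicit what the paper leaves implicit. The rest — dualizing to $\frob*R\iso R^n$ and invoking Kunz — matches the paper exactly.
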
 
\begin{proof}
One direction is immediate from \Prop{P:GorinjFrob}, so assume $\frob*E$ is injective, necessarily of the form $E^n$.   \Thm{T:matlisFrobfuncreg} then yields $\frob *R\iso \frobmatlis R=\matlis{(\frob*E)}=R^n$, so that by Kunz's theorem, $R$ is regular. 
\end{proof} 

\subsection*{F-trivialization}
We have already seen in \Cor{C:finlenitFrob} that \fr{s} trivialize modules of finite length. We now conjecture that a weaker version of this is true in general (we formulate it only for top dimension, as this is the only case we need, but presumably this holds for any non-simple module):

\begin{conjecture}\label{C:dirsumfrob}
A complete $d$-dimensional local ring $R$ in positive \ch\ is \emph{F-trivializing}, meaning that  for every $d$-dimensional $R$-module $M$, there is some $n$ such that $\frob{*}^nM$ is decomposable.
\end{conjecture}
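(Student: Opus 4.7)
The plan is to induct on $d$. The base case $d=0$ is precisely \Cor{C:finlenitFrob}: for any Artinian module $M$ with $\len M\geq 2$, some iterate $\frob{*}^n M\iso k^{\len M}$ is a nontrivial direct sum, so decomposes. (For $\len M=1$ the module is simple, which is excluded by the author's parenthetical remark.)

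For the inductive step with $d\geq 1$, I would first reduce to $k$ perfect via the faithfully flat base change $R\to\scal R{k^{1/p^\infty}}$ used in \Prop{P:dirsumposdep}, exploiting that decomposability descends along faithfully flat extensions with local target. Assuming $M$ has depth at least one (the case $\hlc M0\maxim\neq 0$ requires a separate argument, since an Artinian submodule of $M$ need not be a summand), pick an $M$-regular parameter $x\in\maxim$. Then $\bar R:=R/xR$ is a $(d-1)$-dimensional complete F-finite local ring, and because $x$ annihilates $M/xM$, its Frobenius pullback $\frob{*}(M/xM)$ is naturally a module over $\bar R$ and coincides with the pullback computed over $\bar R$. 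By the inductive hypothesis applied over $\bar R$, some $\frob{*}^n(M/xM)$ decomposes nontrivially.

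Applying the exact functor $\frob{*}^n$ to $\exactseq MxM{}{M/xM}$ yields
\[
0\to \frob{*}^nM\xrightarrow{\varphi}\frob{*}^nM\to\frob{*}^n(M/xM)\to 0,
\]
where $\varphi:=\frob{*}^n(\mulel xM)$ satisfies $\varphi^{q^n}=\mulel x{\frob{*}^nM}$ by iterated application of \Lem{L:frobmat}. A decomposition $\frob{*}^n(M/xM)\iso N_1\oplus N_2$ supplies a nontrivial idempotent $e\in\ndo{\frob{*}^n(M/xM)}$, and the plan is to lift $e$ to a nontrivial idempotent in $\ndo{\frob{*}^nM}$, yielding the desired decomposition.

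The main obstacle is precisely this lifting. Because $\varphi$ is not honest scalar multiplication by $x$ but rather the intricate map encoded by the \persym\ matrix $\mat D(x)$ of \Prop{P:multskewsymm}, idempotents modulo $\varphi(\frob{*}^nM)$ do not mechanically extend. A natural attempt is to iterate Frobenius further so that $\varphi^{q^n}$ becomes honest multiplication by $x$, and then to invoke a Hensel-type argument lifting idempotents modulo successive powers of $\maxim$; the hope is that the matrix coefficients of $\mat D^{\nabla}(\cdot)$ acquire enough structure after iteration to force a nontrivial idempotent in $\ndo{\frob{*}^{n+m}M}$ for $m$ large. This lifting step, together with the subtlety of handling finite-length submodules that are not summands, appears to require input beyond the techniques developed in the paper and constitutes the true heart of the conjecture.
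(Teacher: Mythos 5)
This statement is labeled \texttt{conjecture} in the paper, and the paper offers no proof of it; there is therefore no argument of the author's against which to compare your sketch. What can be said is that your proposal is an honest outline of an inductive strategy and you correctly identify that its two hard points---the depth-zero case and the lifting of idempotents through the injection $\varphi=\frob{*}^n(\mulel xM)$---are not resolved by anything in the paper. That assessment is accurate; neither step follows from the material developed in \S\ref{s:lenfr}--\S\ref{s:frcan}.

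Two further gaps are worth naming, since they would bite even if the idempotent-lifting heuristic were made precise. First, the reduction to perfect residue field is not innocuous here. The paper performs the base change $R\to\scal R{k^{1/p^\infty}}$ only to preserve depth (a numerical invariant that behaves well under faithfully flat maps); decomposability of a finitely generated module does \emph{not} in general descend along a faithfully flat local extension, since $\ndo M$ can acquire new idempotents after extension of scalars, and a decomposition of $\frob{*}^nM\otimes_R\scal R{k^{1/p^\infty}}$ need not come from one of $\frob{*}^nM$. Second, the lifting plan presupposes a surjection $\ndo{\frob{*}^nM}\onto\ndo{\frob{*}^n(M/xM)}$, but no such map exists: an endomorphism $\psi$ of $\frob{*}^nM$ induces an endomorphism of the quotient $\frob{*}^nM/\varphi(\frob{*}^nM)\iso\frob{*}^n(M/xM)$ only when $\psi$ commutes with $\varphi$, and the idempotent $e$ supplied by the inductive hypothesis on $\bar R$ has no reason to lie in the image of that restricted map, let alone to be liftable to an idempotent commuting with $\varphi$. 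The observation $\varphi^{q^n}=\mulel x{\frob{*}^nM}$ is correct but does not by itself create such a lift: $\varphi$ generates a much smaller subring of $\ndo{\frob{*}^nM}$ than multiplication by $x$ would suggest, and the \persym{} shape of $\mat D(x)$ from \Prop{P:multskewsymm} gives no control over the image of $\ndo{\frob{*}^nM}$ in $\ndo{\frob{*}^n(M/xM)}$. In short, the proposal is a plausible line of attack, but every nontrivial step is open, which is consistent with the statement being posed as a conjecture rather than a theorem.
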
 

\begin{theorem}\label{T:dirsumfrobMCM}
An F-trivializing complete three-dimensional local ring $R$  admits an MCM.
\end{theorem}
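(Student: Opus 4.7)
The plan is to reuse the $h$-based minimization strategy that proved \Thm{T:wFpureMCM}, iterated this time under the F-trivializing hypothesis. By \Thm{T:pcanMCM} and \Rem{R:pcanMCM}, it suffices to exhibit a three-dimensional $R$-module $M^\star$ with $h(M^\star):=\mul{\can{M^\star}1}{0}=0$, in which case $\can{M^\star}{}$ is the sought MCM. As in the main theorem's proof, I first perform the faithfully flat base change $R\to\scal R{k^{1/p^\infty}}$ so that $k$ becomes perfect, ensuring via \Cor{C:lenfr} and \eqref{eq:frpcan} that $h$ is invariant under $\frob*$ (and hence under every iterate $\frob{*}^n$).

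Next, set $N:=\min\{h(M):M\text{ three-dimensional, f.g.}\}$ and argue by contradiction that $N\geq 1$ is impossible. Given a three-dimensional minimizer $M$: Krull--Schmidt over the complete local ring $R$, additivity of $h$, and the minimality bound $h(X)\geq N$ on every three-dimensional summand together force $M$ to have exactly one three-dimensional indecomposable summand (with $h$-value $N$), the remaining summands being lower-dimensional with total $h$-value $0$. Replacing $M$ by that indecomposable summand, and running the same analysis on each $\frob{*}^n M$, I obtain a sequence of three-dimensional indecomposables $Q_0=M,Q_1,Q_2,\ldots$ with $h(Q_n)=N$ and a decomposition $\frob* Q_n\iso Q_{n+1}\oplus D_n$ for some lower-dimensional $D_n$ with $h(D_n)=0$.

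The natural closing move is pigeonhole: if one can show that the isomorphism classes $\{[Q_n]\}$ form a finite set (for instance by exhibiting them as summands of a single finitely generated ``Frobenius orbit'' of $M$), then $Q_n\iso Q_{n+k}$ for some $n$ and some $k\geq 1$, so iterating the decomposition $\frob*Q_m\iso Q_{m+1}\oplus D_m$ yields $\frob{*}^k Q_n\iso Q_n\oplus D$ with $D$ lower-dimensional. Re-running the proof of \Prop{P:dirsumposdep} verbatim, but with $\frob{*}^k$ in place of $\frob*$ (legitimate since $h$ is $\frob{*}^k$-invariant), then gives $h(D)=0$.

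The main obstacle, and the step I expect to require genuinely new input, is bridging the resulting dimension gap: the pigeonhole has produced a \emph{lower-dimensional} witness $D$ with $h(D)=0$, whereas \Thm{T:pcanMCM} demands a three-dimensional one. A plausible route is a reduction modulo a common parameter $x$ that is regular on $M$, on $\can{Q_n}{}$, and on $D$, followed by an appeal to \Cor{C:dim2} over $R/xR$ to produce a two-dimensional MCM there, and then a lift back to $R$ using the $\delta$-functoriality of \Prop{P:deltapcan}. Verifying that the iso classes $\{[Q_n]\}$ are indeed finite, and that the lift from dimension two back to dimension three can be carried out, are the two pieces of the argument that I anticipate will be the most delicate.
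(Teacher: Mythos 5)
The paper's proof of this theorem is a single short contradiction argument, much simpler than what you propose: assuming no MCM exists, pick a three-dimensional $M$ with $h(M)$ minimal; F-trivialization gives $\frob{*}^n M \iso Q \oplus N$ with both summands nonzero, and then $h(M) = h(\frob*^n M) = h(Q) + h(N)$ with both $h(Q), h(N) \geq h(M) > 0$ forces $h(M) \geq 2h(M)$, a contradiction. No Krull--Schmidt, no chase through indecomposables, no pigeonhole.

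Your proposal introduces two steps that derail it. First, the pigeonhole on the sequence $Q_0, Q_1, \ldots$ of indecomposable summands requires that the set of isomorphism classes $\{[Q_n]\}$ be finite; you flag this as something to ``show'' but give no mechanism, and indeed it is not clear it holds (this is essentially a finite-F-representation-type hypothesis, which is much stronger than F-trivializing). Second, you correctly identify the dimension gap — a lower-dimensional witness $D$ with $h(D)=0$ does not feed into \Thm{T:pcanMCM} — but the route you sketch (reducing mod a parameter and lifting back) is not how the paper handles it, and more importantly the gap is an artifact of your detour, not an intrinsic difficulty. The way to preempt it is the observation the paper makes just \emph{after} the proof: work inside the F-net of unmixed modules. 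By \Lem{L:pcanunm} one has $\can{\unmix M}1 \sub \can M1$, hence $h(\unmix M) \leq h(M)$, so the minimizer $M$ can be taken unmixed; Frobenius transforms of unmixed modules are unmixed (annihilators of $*m$ and $m$ have the same radical); and any direct summand of a $d$-dimensional unmixed module is itself $d$-dimensional (a lower-dimensional summand would consist entirely of small elements). With both $Q$ and $N$ thereby three-dimensional, the one-line contradiction goes through and the dimension gap never arises. You should abandon the Krull--Schmidt/pigeonhole scaffolding and instead prove (or cite) the unmixedness reduction, then apply the minimality argument directly to a single decomposition.
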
 
\begin{proof}
With $h$ as defined in the introduction (after \eqref{eq:lenfr}), suppose $R$ does not admit an MCM. Hence $h(M)>0$, for all three-dimensional modules $M$. Choose $M$ with $h(M)$ minimal. By assumption, we can find $q=p^n$ and a decomposition $\frob*M=Q\oplus N$. Using \eqref{eq:hpcan} and the additivity of $h$, we get  $h(M)=h(\frob*M)=h(Q)+h(N)$. But the latter sum violates the minimality of $h(M)$, contradiction.
\end{proof}

To verify F-trivialization, we must check all $d$-dimensional modules, and I do not even know whether this is true for  regular local rings of dimension $d\geq 2$. However, less is needed for the proof: let us call a class $\mathfrak C$ of $d$-dimensional modules an \emph{F-net}, if it is closed under \fr{s} and direct summands. We leave it to the reader to check that the class of unmixed modules is an F-net. We say that $R$ is \emph{weakly F-trivializing}, if there exists an F-net $\mathfrak C$ such that for each $M\in\mathfrak C$, there exists $n\in\nat $ for which $\frob*^nM$ is decomposable.   We may now weaken the assumption in \Thm{T:dirsumfrobMCM} to being weakly F-trivializing: indeed, in the proof, just take the minimal $h(M)$ for $M$ in the F-net. For a given module $Q$, let $\mathfrak F_Q$ be the collection of direct summands of its \fr{s} $\frob*^nQ$. If $Q$ is unmixed, then so is any module in $\mathfrak F_Q$, and so this is an example of an F-net; in fact, it is the smallest F-net generated by $Q$. A regular local ring is now easily seen to be weakly F-trivializing: indeed, $\mathfrak F_S$ is just the class of   free  modules, and so is clearly F-trivializing. We may restate this now as follows:

\begin{corollary}\label{C:wFtriv}
If $R$ is a complete three-dimensional local ring admitting an unmixed module $Q$ such that any direct summand of a \fr\ of $Q$ has itself a \fr\ that is decomposable, then $R$ admits an MCM.\qed
\end{corollary}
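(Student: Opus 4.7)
The plan is to mimic the proof of \Thm{T:dirsumfrobMCM}, taking the minimum of the invariant $h$ over the F-net generated by $Q$ rather than over all three-dimensional modules. First I would set $\mathfrak{F}_Q$ to be the class of nonzero direct summands of the iterated \fr{s} $\frob*^nQ$, for $n \geq 0$. Verifying that $\mathfrak{F}_Q$ is an F-net is routine: closure under \fr\ follows because a summand of $\frob*^nQ$ gives, after one more \fr, a summand of $\frob*^{n+1}Q$; closure under direct summands is transitive; and every member of $\mathfrak{F}_Q$ is three-dimensional and unmixed, since \fr\ preserves dimension and sends unmixed modules to unmixed modules, while a direct summand of an unmixed module of maximal dimension is itself unmixed of maximal dimension (its associated primes inject into those of the ambient module, all of which are of top dimension). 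By construction, the hypothesis on $Q$ then says exactly that every $M \in \mathfrak{F}_Q$ has some \fr\ $\frob*^nM$ that decomposes nontrivially.

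Following \S\ref{s:frpcan}, I would base change along the scalar extension $R \to \scal R{k^{1/p^\infty}}$ to reduce to the case of a perfect residue field; this preserves depth, dimension, unmixedness, and the F-net hypothesis. Over a perfect residue field, the computation \eqref{eq:hpcan} specializes to $h(\frob*N) = h(N)$ for every $R$-module $N$, and iterating yields $h(\frob*^nN) = h(N)$. Assuming for contradiction that $R$ admits no MCM, the contrapositive of \Thm{T:pcanMCM} gives $h(M) > 0$ for every three-dimensional module $M$, and in particular on every $M \in \mathfrak{F}_Q$. I may therefore pick $M \in \mathfrak{F}_Q$ minimizing this positive integer.

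By hypothesis some $\frob*^nM$ decomposes as $A \oplus B$ with both summands nonzero; unmixedness of $\frob*^nM$ forces $\dim A = \dim B = 3$, and both summands lie in $\mathfrak{F}_Q$. Since local cohomology and Matlis duality commute with direct sums and length is additive, $h$ itself is additive on direct sums, giving
\[
h(M) = h(\frob*^nM) = h(A) + h(B),
\]
with $0 < h(A), h(B) < h(M)$, contradicting minimality. The main obstacle is really the bookkeeping in the first paragraph: making sure both summands $A$ and $B$ remain three-dimensional at each decomposition step (so that each has $h > 0$), which is exactly what the unmixedness of $Q$ buys us; the rest is a direct application of \eqref{eq:frpcan} and \eqref{eq:lenfr}.
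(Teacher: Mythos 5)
Your proof is correct and follows exactly the route the paper sketches: form the F-net $\mathfrak F_Q$ of direct summands of iterated \fr{s} of $Q$, and rerun the minimality argument of \Thm{T:dirsumfrobMCM} inside that F-net. You also spell out the one point the paper leaves tacit (``we leave it to the reader to check\dots''), namely that unmixedness propagates through \fr{s} and through direct summands, and that this is what forces both pieces of the decomposition $\frob*^nM\iso A\oplus B$ to be three-dimensional, so that $h(A),h(B)>0$ and the minimality contradiction actually lands. That observation is indeed the only thing the unmixedness hypothesis is there for, and it is where a naive reading of \Thm{T:dirsumfrobMCM} (decompose, apply additivity of $h$) would be incomplete if a summand could drop dimension.
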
 

In the examples below, the structure of the F-net $\mathfrak F_R$ appears already  a quite intriguing invariant of $R$. 
%
%

\section{Examples}\label{s:examp}

We will present most examples without the calculations (they become tedious  in higher dimensions, but explaining the methods to tackle these would be the topic of a paper on its own). Unfortunately, all examples  are   \CM, and so their weak F-splitness is rather a  moot point.

\begin{example}[Regular local rings]\label{E:reg2}
Let $(S,\maxim)$ be the power series ring in $d$ variables over a perfect field $k$ of \ch\ $3$. If $d=2$, we have
$$
\frob*S\iso S^9\qquad\text{and}\qquad \frob*\maxim\iso \maxim\oplus S^8
$$
showing that the F-net $\mathfrak F_\maxim$ consists of direct sums of copies of $R$ and $\maxim$.  Similarly, one can show that $\frob*\maxim^2\iso \maxim S^3\oplus S^6$ and $\frob{*}\maxim^3\iso  \maxim S^6\oplus S^3$. However, the structure of $\frob*\maxim^4$ seems no longer to follow this pattern as it contains a free summand of rank four. 

For $d=3$, after some lengthy calculations, we find $\frob*\maxim\iso \maxim\oplus S^{26}$. It seems therefore reasonable to postulate the same in arbitrary dimension and \ch
$$
\frob*\maxim \overset?\iso \maxim\oplus S^{p^d-1}.
$$
\end{example}

\begin{example}[Curves]\label{E:cusp}
Let $(R,\maxim)$ be the local ring at the origin of the cusp $x^2=y^3$ with $p=3$. Note that $R$ is not F-pure (for in dimension one this is equivalent with being regular). Indeed, one verifies that
$$
\frob*R\iso \frob*\maxim\iso \maxim R^3.
$$
In particular, $\maxim$ is F-split whence $R$ is weakly F-split. Moreover,  the F-net $\mathfrak F_R$ consists of all direct sums of copies of $R$ and $\maxim$, and $R$ is F-trivializing for this F-net, that is to say, $R$ is weakly F-trivializing.  

Take instead the local ring of the curve $x^3=y^4$ in \ch\ $p=5$. We have
$$
\frob*R\iso (x,y^2)R\oplus \maxim^2R^4\qquad\text{and}\qquad \frob*\maxim^2\iso\frob*(x,y^2)R\iso \maxim^2R^5.
$$
In particular, $R$ is weakly F-split witnessed by the F-split module $\maxim^2$, and also  weakly F-trivializing, witnessed by the F-net  $\mathfrak F_R$, which is generated by the three indecomposables $R$, $(x,y^2)R$ and $\maxim^2$.
\end{example} 

\begin{example}[Surfaces]\label{E:surf}
Let $R$ be the local ring at the origin of the surface $x^3=y^2z$. By Federer's criterion \cite{FW}, one can see that $R$ is not F-split when $p=3$. In fact, we have
$$
\frob*R \iso (x,y)^2R^3\oplus (x^2,y)R^3\oplus (x^2,yz)R^3
$$

When $p=5$, we can show that $(x^2,y)R$ is a direct summand of $\frob*R$ and also a direct summand of its own \fr\ $\frob*(x^2,y)R$, i.e., F-split, showing that $R$ is weakly F-split. 

For $p=7$, we look at the surface $x^2=y^4z^5$. In \cite{SchToric}, we introduced the notion of F-integral closure (=collection of fractions $f/g$   some $p^n$-th power of which lies in $R$), and showed that they often are already MCM's. In this case, its F-integral closure  $R':=\pol R{\tfrac x{yz^2}}$ is indeed an MCM, but it is also F-split: $R'$ is a direct summand of $\frob*R'$.
\end{example}

%

\providecommand{\bysame}{\leavevmode\hbox to3em{\hrulefill}\thinspace}
\providecommand{\MR}{\relax\ifhmode\unskip\space\fi MR }
\providecommand{\MRhref}[2]{%
  \href{http://www.ams.org/mathscinet-getitem?mr=#1}{#2}
}
\providecommand{\href}[2]{#2}

\end{document}